\theoremstyle{plain}
\newtheorem{theorem}{Theorem}[section]
\newtheorem*{theorem*}{Theorem}
\newtheorem{corollary}[theorem]{Corollary}
\newtheorem{lemma}[theorem]{Lemma}
\newtheorem{proposition}[theorem]{Proposition}
\theoremstyle{definition}
\theoremstyle{remark}
\newcommand{\HH}{\mathbb{H}}
\newcommand{\RR}{\mathbb{R}}
\title[On ideal vertices of right--angled hyperbolic polyhedra]{On ideal vertices of right--angled \\ hyperbolic polyhedra}
\author{Stepan Alexandrov}
\address{Department of Discrete Mathematics, Moscow Institute of Physics and Technology}
\email{aleksandrov.sa@phystech.edu}
\urladdr{cyanprism.github.io}
\begin{document}

\begin{abstract}
	In this note, we improve Nikulin's inequality in the case of right--angled hyperbolic polyhedra. The new inequality allows to give much shorter proofs of the known dimension bounds. We also improve Nonaka's lower bound on the number of ideal vertices for right--angled hyperbolic polyhedra.
\end{abstract}

\maketitle

\section{Introduction}

A convex polyhedron is called \emph{right--angled} if all its dihedral angles are equal to $\pi/2$. Unlike in spherical or Euclidean spaces, the combinatorics of right--angled polytopes in hyperbolic space is more complex. For example, one may start with a regular right--angled dodecahedron in $\HH^3$. Then one can glue two such dodecahedra together by identifying a pair of mutually isometric pentagonal faces and obtain a new right--angled polytope. This procedure can be performed inductively to obtain a garland of dodecahedra. In the Euclidean case, the only right--angled polytope is a parallelogram, and any such garland would be just a parallelogram again. 

However, it is not trivial to show that hyperbolic right--angled polyhedra exist in $\mathbb{H}^n$ for an arbitrary $n\geq 2$ (while it is obvious in the Euclidean case). In fact, there exist upper bounds on $n$, and thus there are no right--angled hyperbolic (or, more generally, Coxeter hyperbolic) polyhedra in higher dimensions. 

\subsection{Dimension bounds for right--angled hyperbolic polyhedra}

It is known that there are no compact polyhedra in $\HH^{\geqslant 30}$ with all dihedral angles being integer submultiples of $\pi$ (\cite{Vin84}). Finite volume polyhedra of this type do not exist in $\HH^{\geqslant 996}$ (see \cite{Kho86, Pro86}). This bound can be significantly improved for right--angled polyhedra as follows.

\begin{theorem}[\cite{PV05}] \label{theorem: compact dimensions}
	There are no compact right--angled polyhedra in $\HH^{\geqslant 5}$.
\end{theorem}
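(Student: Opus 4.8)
The plan is to prove an elementary structural lemma about compact right--angled polyhedra, reduce the problem to dimension $5$, and there obtain a contradiction from the Dehn--Sommerville relations. First I would show, by induction on $m \geq 2$, that a compact right--angled polyhedron in $S^{m}$ is a spherical simplex. For $m = 2$ this is immediate from Gauss--Bonnet: a spherical polygon with all interior angles $\pi/2$ has area $2\pi - k\pi/2 > 0$, forcing $k = 3$. For $m \geq 3$, if $Q \subset S^{m}$ is compact right--angled then its vertex figures are compact right--angled polyhedra in $S^{m-1}$, hence spherical simplices by the induction hypothesis, so exactly $m$ facets meet at each vertex, i.e.\ $Q$ is simple; moreover a right--angled spherical $(m-1)$--simplex has pairwise orthogonal facet normals and is therefore an orthant--simplex, all of whose edges have length $\pi/2$, so the edges of $Q$ at each vertex are pairwise orthogonal and every $2$--face of $Q$ is a right--angled spherical polygon, hence a triangle by the case $m = 2$. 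Finally, in a simple polyhedron all of whose $2$--faces are triangles any two neighbours of a vertex span a triangular $2$--face and are thus adjacent, so the graph is a clique; being simple, $Q$ then has exactly $m+1$ vertices and is a simplex. Running the same vertex--figure argument for a compact right--angled $P \subset \HH^{n}$ shows that $P$ is simple, that its edges at each vertex are pairwise orthogonal (so every $2$--face of $P$ is a right--angled hyperbolic polygon, hence has at least $5$ edges, again by Gauss--Bonnet), and that any two facets of $P$ with non--empty intersection are orthogonal.

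\emph{Reduction to $\HH^{5}$.} A facet $F$ of a compact right--angled $P \subset \HH^{n}$ lies in a hyperplane $H \cong \HH^{n-1}$; its facets are the traces on $H$ of the facets of $P$ adjacent to $F$, and two such traces meeting in a ridge of $F$ are orthogonal (their preimages in $P$ intersect, hence are orthogonal by the lemma), so $F$ is again a compact right--angled polyhedron, now in $\HH^{n-1}$. Iterating, it suffices to rule out a compact right--angled polyhedron in $\HH^{5}$.

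\emph{The contradiction in $\HH^{5}$.} Suppose $P \subset \HH^{5}$ is compact and right--angled, with face numbers $f_{i}$. Since $P$ is simple, the vertex figure at each vertex is a $4$--simplex, so each vertex lies on exactly $\binom{5}{2} = 10$ two--faces; counting incident pairs (vertex, $2$--face) and using that every $2$--face has at least $5$ vertices gives $10 f_{0} = \sum_{F}\#(\text{vertices of }F) \geq 5 f_{2}$, that is $f_{2} \leq 2 f_{0}$. Passing to the dual simplicial $5$--polytope $P^{*}$, one has $f_{0} = f_{4}(P^{*})$ and $f_{2} = f_{2}(P^{*})$, and the Dehn--Sommerville relations express these through the $h$--vector as $f_{4}(P^{*}) = 2 + 2h_{1} + 2h_{2}$ and $f_{2}(P^{*}) = 10 + 6h_{1} + 4h_{2}$, with $h_{1} = f_{0}(P^{*}) - 5$. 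Then $f_{2} \leq 2 f_{0}$ reads $10 + 6h_{1} + 4h_{2} \leq 4 + 4h_{1} + 4h_{2}$, i.e.\ $h_{1} \leq -3$; but $P^{*}$ has at least $6$ vertices, so $h_{1} \geq 1$ --- a contradiction. Hence no compact right--angled polyhedron exists in $\HH^{5}$, and by the reduction none exists in $\HH^{\geq 5}$.

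I expect the only non--routine step to be the structural lemma: it carries all of the hyperbolic and spherical geometry, and its proof combines the little induction on dimension with the observation that in a simple polyhedron with triangular $2$--faces every vertex neighbourhood is a clique. The displayed count is essentially the $(l,k,n) = (0,2,5)$ instance of a Nikulin--type inequality, which explains why a sufficiently sharp general inequality should recover this (and the other known) dimension bound uniformly.
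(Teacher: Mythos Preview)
The paper does not actually supply its own proof of this theorem: Theorem~\ref{theorem: compact dimensions} is only stated with a citation to \cite{PV05}, and the new short proofs in Sections~\ref{section: ideal polyhedra} and~\ref{section: finite volume polyhedra} concern Theorems~\ref{theorem: ideal dimensions} and~\ref{theorem: finite volume dimensions}. So there is nothing in the paper to compare against line by line.

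That said, your argument is correct and is exactly in the spirit of the paper's method (and of the original \cite{PV05} proof). Your structural lemma is the standard fact that vertex figures of a compact right--angled hyperbolic polyhedron are right--angled spherical simplices, forcing simplicity; the Gauss--Bonnet step giving $\geq 5$ vertices per $2$--face is the compact analogue of the paper's lower bounds on $2$--face sizes. The key inequality $10f_0 \geq 5f_2$ is precisely the $(l,k,n)=(0,2,5)$ Nikulin count (Proposition~\ref{proposition: Nikulin inequality} gives $a_2^0 < 4$, while your $2$--faces have $a_2^0 \geq 5$), and your $h$--vector computation $f_4(P^*)=2+2h_1+2h_2$, $f_2(P^*)=10+6h_1+4h_2$ leading to $h_1\leq -3$ is a clean way to package Dehn--Sommerville. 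This is the same machinery the paper deploys in higher dimensions for the ideal and finite--volume cases; you have simply specialised it to the compact setting, where no truncation of cusps is needed and the arithmetic is simpler.
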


Our first main result is a new short proof of the following two theorems.

\begin{theorem}[\cite{Kol12}] \label{theorem: ideal dimensions}
	There are no ideal right--angled polyhedra in $\HH^{\geqslant 7}$.
\end{theorem}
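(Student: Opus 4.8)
The plan is to combine the improved Nikulin inequality for right--angled polytopes with the rigid combinatorics forced at an ideal vertex. I begin with two structural observations about an ideal right--angled $d$--polytope $P$. First, every face $G$ of $P$ of dimension $k$ with $2\le k\le d-1$ is again an ideal right--angled $k$--polytope: its vertices are vertices of $P$, hence ideal points lying in the totally geodesic $\HH^k$ spanned by $G$, and its dihedral angles are again right, since a face of a right--angled polytope is right--angled. In particular every facet of $P$ is an ideal right--angled $(d-1)$--polytope, so it is enough to rule out $d=7$ (the numerical estimates below will in fact be uniform in $d\ge 7$): an ideal right--angled $d$--polytope with $d\ge 8$ would, by repeatedly passing to a facet, produce one of dimension $7$. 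Second, the vertex figure of each vertex of $P$ is a Euclidean right--angled polytope, i.e.\ a parallelepiped, hence combinatorially a $(d-1)$--cube; reading off the face lattice of the cube shows that each vertex of $P$ lies on exactly $2(d-1)$ facets and, for $1\le l\le d-1$, on exactly $\binom{d-1}{l-1}2^{\,d-l}$ faces of dimension $l$, while each $l$--face of $P$ lies on exactly $d-l$ facets.

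Next I would set up the counting. Let $f_l$ denote the number of $l$--faces of $P$ and $\bar f_l^{\,d-1}$ the mean number of $l$--faces of a facet. Double counting the incident pairs (facet, $l$--face) and (vertex, $l$--face) gives
\[
\bar f_l^{\,d-1}\, f_{d-1}=(d-l)\,f_l,\qquad \sum_{G\colon \dim G=l} f_0(G)=\binom{d-1}{l-1}2^{\,d-l}\,f_0 .
\]
Since each $l$--face is itself an ideal right--angled $l$--polytope it has at least $m_l$ vertices, where $m_2=3$, $m_3=6$ (realized by the ideal octahedron), and $m_l$ is bounded below recursively by the same incidence identities together with the Euler relation $\sum_{l}(-1)^lf_l=1+(-1)^{d-1}$ applied inside each face; this forces $f_l$, and with it $\bar f_l^{\,d-1}$, to be large. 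Against this stands the improved Nikulin inequality --- which applies to right--angled hyperbolic polytopes with no simplicity hypothesis --- bounding $\bar f_l^{\,d-1}$ from above. Choosing $l$ appropriately (I expect $l=2$, perhaps $l=3$) and combining the two estimates with the Euler relation, the interval of admissible values of $\bar f_l^{\,d-1}$ becomes empty as soon as $d\ge 7$; this last step is a finite computation.

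The crux, and the main obstacle, is the lower bound: one must show that an ideal right--angled $(d-1)$--polytope is forced to carry enough $l$--faces. The trivial fact that a $k$--polytope has at least $\binom{k+1}{l+1}$ faces of dimension $l$ is far too weak and would not separate dimension $7$ from lower ones. One has to use genuinely that all vertex links are cubes --- equivalently, that the dual polytope is cubical --- to inflate the count, and the recursive control of the numbers $m_l$ must be arranged so that the resulting inequality breaks for every $d\ge 7$ while remaining (correctly) inconclusive for $d\le 6$, where ideal right--angled polytopes are known or expected to exist, the ideal $24$-cell in $\HH^4$ being the prominent extremal example. Thus the estimate has to be pushed to the edge of sharpness, and the delicate point is verifying that the combined bound is still satisfiable in dimensions $4$--$6$ yet collapses in dimension $7$; granting the precise form of the improved Nikulin inequality, checking this is routine.
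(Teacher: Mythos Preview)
Your proposal has a genuine gap: you invoke ``the improved Nikulin inequality'' as a ready-made tool, but that inequality is precisely the content of the paper's proof, not an input to it. If by ``improved Nikulin'' you mean the standard Nikulin--Khovanskii bound for polytopes simple at edges, then for $d=7$, $k=2$, $l=0$ it only gives
\[
a_2^0 \;<\; \binom{7}{5}\,\frac{\binom{4}{0}+\binom{3}{0}}{\binom{4}{2}+\binom{3}{2}} \;=\; \tfrac{14}{3},
\]
which does not conflict with the trivial lower bound $m_2=3$; the analogous check for $l=3$ gives $a_3^0<14$ versus $m_3=6$, again no contradiction. Your recursive scheme for pushing $m_l$ higher via Euler relations is left entirely unspecified, and you acknowledge yourself that the crux is exactly this lower bound; deferring it to ``a finite computation'' is not a proof.

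The paper's argument is different and concrete. It truncates every (ideal) vertex to obtain a genuinely simple polytope $P'$, applies the Dehn--Sommerville equations to the dual of $P'$, and obtains an exact expression
\[
\varkappa \;=\; \frac{192\,a_0}{a_2} \;=\; \frac{32}{11}\left(1-\frac{2a_5-6a_6+12}{a_2}\right)
\]
for the average number of vertices in a $2$--face. The key external input is not a vertex count but Dufour's facet bound: every $6$--dimensional finite volume right--angled polyhedron has at least $27$ facets, whence $2a_5\ge 27a_6$ and so $\varkappa<\tfrac{32}{11}<3$, a contradiction. Your sketch contains neither the Dehn--Sommerville step (which is what produces the \emph{improved} constant $\tfrac{32}{11}$ in place of $\tfrac{14}{3}$) nor an adequate substitute for Dufour's bound; without one of these, the argument does not close in dimension $7$.
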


\begin{theorem}[\cite{PV05, Duf10}] \label{theorem: finite volume dimensions}
	There are no finite volume right--angled polyhedra in $\HH^{\geqslant 13}$.
\end{theorem}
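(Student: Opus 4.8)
The plan is to run the scheme that already settles the compact case (Theorem~\ref{theorem: compact dimensions}) --- feed the face numbers of the polyhedron into a Nikulin--type inequality and contradict it with the combinatorics forced by the right angles --- but with Nikulin's classical inequality replaced by its refinement for right--angled polyhedra, the extra strength being needed precisely because of the ideal vertices.

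\emph{Local structure.} If $P\subset\HH^n$ is a finite--volume right--angled polyhedron and $v$ is a vertex, the link of $v$ is a compact right--angled spherical polytope when $v$ is ordinary, hence an $(n-1)$--simplex, and a compact right--angled Euclidean polytope when $v$ is ideal, hence an $(n-1)$--dimensional box. So $n$ facets meet at an ordinary vertex and $2(n-1)$ at an ideal vertex, in the latter case arranged as the facets of an $(n-1)$--cube. I would extract two consequences: every face of $P$ of positive dimension is the intersection of exactly as many facets as its codimension --- so $P$ is simple away from its ideal vertices --- and every face of $P$ is again a finite--volume right--angled polyhedron of the corresponding dimension. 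In particular every facet of $P$ lies in $\HH^{n-1}$ and every $2$--face in $\HH^2$, and an elementary area computation shows that a finite--volume right--angled polygon with $a$ ideal and $b$ ordinary vertices satisfies $2a+b\ge5$; thus a $2$--face of $P$ with no ideal vertex is a $k$--gon with $k\ge5$, while every smaller $2$--face meets an ideal vertex of $P$.

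\emph{The comparison.} One then applies the improved Nikulin inequality to $P$. It bounds from above a suitable averaged face--number ratio --- for instance the average number of edges of a $2$--face, possibly combined with data on how $2$--faces sit inside $3$--faces --- by a function of $n$ that stays below the ``$\ge 5$'' threshold even in the presence of the cubical ideal vertices; this is the improvement over the classical inequality, whose bound would be pulled back up by the many square $2$--faces that truncating the ideal vertices introduces. Against this I would set a lower bound assembled from the structural facts above: every $2$--face avoiding the ideal vertices is a $\ge5$--gon, every $3$--face is a right--angled polyhedron in $\HH^3$ and so already has many faces, and more generally every low--dimensional face is a right--angled polyhedron of that dimension, all fed through the incidence identities that express ``$P$ is simple away from its ideal vertices''. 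I expect the two estimates to collide first at $n=13$; for $n\ge14$ the conclusion then follows by induction on $n$, since a facet of a finite--volume right--angled polyhedron in $\HH^n$ is again one in $\HH^{n-1}$, which is excluded by the inductive hypothesis.

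\emph{The main obstacle.} The real difficulty is handling the ideal vertices. Since $P$ fails to be simple there, the number $I$ of ideal vertices enters both the refined Nikulin bound and the combinatorial lower bound, and each ideal vertex pulls the relevant averages toward their cubical values; one must rule out every value of $I$ that would keep the two estimates compatible in dimension $13$. This is where one needs both the fine combinatorics of the cube links and a good lower bound on $I$ in terms of the remaining face numbers --- exactly the inputs provided by the improved Nikulin inequality and by the sharpened Nonaka bound established elsewhere in the paper. With these in hand the proof of Theorem~\ref{theorem: finite volume dimensions} reduces to checking a short list of explicit inequalities in $n$, of which $n=13$ is the first to fail.
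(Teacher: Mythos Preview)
Your outline has the right overall architecture --- Dehn--Sommerville after truncating the ideal vertices, a Nikulin--type upper bound on the average number of vertices of a $2$--face, and a combinatorial lower bound coming from the fact that a right--angled hyperbolic polygon with $a$ ideal and $b$ finite vertices satisfies $2a+b\ge 5$ --- and this is indeed the route the paper takes. But you have misidentified the mechanism that closes the gap in dimension $13$, and the ingredient you think you need is not the one that is actually used.

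The difficulty you correctly flag is that the number $v_\infty$ of ideal vertices enters both sides. Your plan is to ``rule out every value of $I$'' using a lower bound on $v_\infty$ supplied by the sharpened Nonaka estimates. That is not what happens, and in the paper's logical order those estimates come \emph{after} the proof of this theorem; they are not available as input. The actual device is to replace the naive count $\alpha_0 v_0 + \alpha_\infty v_\infty$ of (vertex, $2$--face) incidences by a \emph{weighted} count $\alpha_0 v_0 + c\,\alpha_\infty v_\infty$ with a specific constant $c=\tfrac{10309}{6144}>1$, chosen so that when one forms the difference $\bigl(\alpha_0 v_0 + c\,\alpha_\infty v_\infty\bigr)-\tfrac{13}{3}a_2$ and expands via the Dehn--Sommerville relations, both $v_\infty$ and $a_7$ cancel. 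What remains is a linear combination of $a_8,\dots,a_{12}$ (plus a constant), and \emph{that} is shown to be negative using Dufour's lower bounds on the number of facets of a finite--volume right--angled polyhedron in $\HH^9$ and $\HH^{11}$ (namely $a_8\ge\tfrac{152}{5}a_9$ and $a_{10}\ge\tfrac{564}{3}a_{11}$), together with $a_{12}>2$. No bound on $v_\infty$ is ever needed.

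On the lower side, the same weight $c$ is applied per $2$--face: an ideal vertex contributes $c$ and a finite vertex contributes $1$, so the four admissible types of $2$--face (at least $3$ ideal; $2$ ideal $+$ $1$ finite; $1$ ideal $+$ $3$ finite; $\ge 5$ finite) all have weighted count strictly above $\tfrac{13}{3}$. This is where your $2a+b\ge 5$ observation is used, but with the weighting rather than the raw count. The contradiction is then immediate. So the missing idea in your sketch is precisely this choice of weight that eliminates $v_\infty$ from the inequality; without it, you are left trying to control $v_\infty$ externally, which is the wrong direction.
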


The bound stated in Theorem~\ref{theorem: compact dimensions} is exact. The exactness of the other bounds is unknown: examples of ideal right--angled polyhedra are only known up to dimension $4$ and examples of finite volume right--angled polyhedra are only known up to dimension $8$.

Nikulin's inequality (\cite[Theorem~3.2.1]{Nik81}) states that low--dimensional faces of a simple Euclidean polytope cannot have too many faces on average. Khovanskii proved that Nikulin's inequality holds for polytopes that are simple at edges (\cite[Theorem~10]{Kho86}). We prove that Nikulin's inequality not only holds for polytopes that are simple at edges but can also sometimes be improved. Section~\ref{section: ideal polyhedra} deals with the inequality for $7$--dimensional ideal finite volume right--angled hyperbolic polyhedra and Section~\ref{section: finite volume polyhedra} deals with the inequality for $13$--dimensional right--angled hyperbolic polyhedra. Our proofs are based on the fact that every ideal vertex is contained in many facets. We believe that a more general case can be proved.

\subsection{Number of ideal vertices of right--angled hyperbolic polyhedra}

Lower bounds on the number of ideal vertices and facets of right--angled polyhedra are essential to prove the absence of such polyhedra in higher dimensions. Let $\mathcal{P}^n$ denote the family of finite volume non--compact right--angled hyperbolic polyhedra, $a_k(P)$ and $v_\infty(P)$ denote the number of $k$--faces and the number of ideal vertices of a polyhedron $P$ respectively. In \cite{Non15} Nonaka obtained some estimates on the number of ideal vertices of a right--angled hyperbolic polyhedron.

\begin{theorem}[{\cite[Main Theorem~1.2]{Non15}}] \label{theorem: ideal vertices}
	Let $P^n \in \mathcal{P}^n$. Then $v_\infty(P^n) \geqslant v_\infty^n$, where $v_\infty^n$ is defined by the following table.
	\[\renewcommand{\arraystretch}{1.2}
	\newcolumntype{C}[1]{>{\centering\arraybackslash$}p{#1}<{$}}
	\begin{array}{ | c || *{2}{C{1.5em}|} *{3}{C{2.5em}|} C{4em}| *{2}{C{4.5em}|} }
		\hline
		n & 5 & 6 & 7 & 8 & 9 & 10 & 11 & 12 \\
		\hline
		v_\infty^n & - & 3 & 17 & 36 & 91 & 254 & 741 & 2200 \\
		\hline
	\end{array}
	\]
\end{theorem}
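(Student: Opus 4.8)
The plan is to argue by induction on the dimension $n$, starting from two structural properties that hold for every right--angled polyhedron. The first is that every facet of a right--angled polyhedron is itself a right--angled polyhedron of one dimension lower: if two facets meet along a ridge with dihedral angle $\pi/2$, then a facet adjacent to both of them meets them orthogonally, so the dihedral angle it induces along the intersection of the corresponding sub--facets is again $\pi/2$. The second is that the link of an ideal vertex of a finite volume polyhedron in $\HH^n$ is a compact Euclidean polyhedron, which for a right--angled polyhedron is a compact right--angled Euclidean $(n-1)$--polyhedron, hence combinatorially an $(n-1)$--cube. It follows that every ideal vertex of $P^n$ lies in exactly $2(n-1)$ facets, each of them non--compact, and that on each such facet the vertex is again ideal. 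Combining this with Theorem~\ref{theorem: compact dimensions}, for $n \geqslant 6$ \emph{every} facet of $P^n \in \mathcal{P}^n$ is forced to be a non--compact finite volume right--angled polyhedron, i.e.\ an element of $\mathcal{P}^{n-1}$.

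For the inductive step, suppose the bound holds in all dimensions below $n$, with $n$ in the range of the table, and let $P = P^n \in \mathcal{P}^n$. Double--count the incidences between ideal vertices and facets of $P$: each ideal vertex lies in exactly $2(n-1)$ facets, while each facet lies in $\mathcal{P}^{n-1}$ — a dimension covered by the table as soon as $n \geqslant 7$ — and hence by the inductive hypothesis carries at least $v_\infty^{n-1}$ ideal vertices of $P$. Therefore
\[
	2(n-1)\, v_\infty(P) \;=\; \sum_{F} v_\infty(F) \;\geqslant\; a_{n-1}(P)\cdot v_\infty^{n-1},
\]
the sum being over the facets $F$ of $P$, so that $v_\infty(P) \geqslant \tfrac{1}{2(n-1)}\, a_{n-1}(P)\, v_\infty^{n-1}$. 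Plugging in a lower bound for the number of facets $a_{n-1}(P)$ of a finite volume right--angled $n$--polyhedron — a bound that can be produced by the same incidence bookkeeping (each facet already lies in $\mathcal{P}^{n-1}$ and so has many facets, forcing $P$ to have many ridges and hence many facets) or extracted from Nikulin's inequality — yields the tabulated value $v_\infty^n$.

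The base of the induction, the cases $n = 6$ and $n = 7$, is where the genuine work lies, and it has to be carried out by a direct combinatorial analysis. One assumes $v_\infty(P)$ is smaller than claimed, uses the $(n-1)$--cube structure of the links to enumerate the facets through each ideal vertex and the constraints they impose on one another, passes to facets to descend in dimension, and invokes the non--existence of compact right--angled polyhedra in $\HH^{\geqslant 5}$ together with the well--understood low--dimensional picture in $\HH^{3}$ and $\HH^{4}$ to reach a contradiction.

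I expect the principal obstacle to be obtaining a lower bound on $a_{n-1}(P)$ strong enough to give the stated constants — equivalently, ruling out the scenario in which the ideal vertices of $P$ accumulate on a small set of facets. This requires quantitative control of how the non--compact facets of a right--angled polyhedron intersect: recall that two distinct facets meet in at most one ridge, itself a lower--dimensional right--angled polyhedron, so one needs estimates on ideal vertices of those ridges in order to bound the overcounting in the incidence sum. That control, rather than the incidence count itself, is what limits how close to the conjectural truth the resulting bounds can be; the delicate low--dimensional base cases are the other place where substantial effort is unavoidable.
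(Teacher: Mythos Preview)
This theorem is not proved in the paper --- it is quoted from \cite{Non15} as the prior state of the art, and the paper's own contribution is the sharper table in the theorem that follows, established in Section~\ref{section: cusps and facets}. For that stronger result the paper uses exactly the double--counting identity you write down (it appears verbatim as Lemma~\ref{lemma: ideal vertices}) together with the same inductive scheme, so your skeleton is the right one. The facet lower bounds, however, are not obtained from Nikulin's inequality or from ad hoc incidence bookkeeping as you suggest: they are imported from Dufour's recursion for $\nu_n = \min_{P\in\mathcal P^n}\bigl(a_{n-1}(P)+v_\infty(P)\bigr)$ (Propositions~\ref{proposition: nu_5}--\ref{proposition: nu_n}), and the base of the induction is not a bespoke combinatorial analysis in dimensions $6$ and~$7$ but the single observation $v_\infty(P^5)\geqslant 2$ (Corollary~\ref{corollary: cusps of 5-polyhedra}), which rests on Nonaka's own structural result about right--angled $3$--polyhedra with at most one ideal vertex (Proposition~\ref{proposition: facets of 3-polyhedra}) combined with Nikulin's inequality. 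The two pieces you flag as difficult --- the facet count and the low--dimensional start --- are therefore precisely the quantitative inputs that determine the constants in either table, and your proposal leaves them as promissory notes; the closing paragraph about ideal vertices accumulating on few facets and needing control over ridges is a red herring, since the argument needs only the uniform per--facet bound $v_\infty(F)\geqslant v_\infty^{\,n-1}$ together with an independent facet--count estimate, and no finer analysis of how facets overlap.
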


We managed to improve Nonaka's estimates as follows.

\begin{theorem}
	Let $P^n \in \mathcal{P}^n$. Then $v_\infty(P^n) \geqslant v_\infty^n$, where $v_\infty^n$ is defined by the following table.
	\[\renewcommand{\arraystretch}{1.2}
	\newcolumntype{C}[1]{>{\centering\arraybackslash$}p{#1}<{$}}
	\begin{array}{ | c || *{2}{C{1.5em}|} *{3}{C{2.5em}|} C{4em}| *{2}{C{4.5em}|} }
		\hline
		n & 5 & 6 & 7 & 8 & 9 & 10 & 11 & 12 \\
		\hline
		v_\infty^n & 2 & 6 & 23 & 135 & 1704 & 182\,044 & 1.67 \cdot 10^9 & 1.27 \cdot 10^{17} \\
		\hline
	\end{array}
	\]
\end{theorem}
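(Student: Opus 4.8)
The starting point is a structural fact about ideal vertices. If $v$ is an ideal vertex of a finite volume right--angled hyperbolic $n$--polyhedron $P$, then a small horosphere centred at $v$ cuts out the link of $v$, which is a compact \emph{Euclidean} right--angled $(n-1)$--polyhedron. As recalled in the introduction, every compact Euclidean right--angled polyhedron is a product of segments, so this link is combinatorially an $(n-1)$--cube; consequently $v$ lies on exactly $2(n-1)$ facets of $P$ (and, more generally, on $\binom{n-1}{j-1}2^{\,n-j}$ faces of dimension $j$). This is the ``many facets'' phenomenon that drives everything.

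The plan is to argue by induction on $n$, the incidence between ideal vertices and facets being the main bookkeeping device. Each facet $F$ of $P^n$ is itself a finite volume right--angled hyperbolic $(n-1)$--polyhedron, and an ideal vertex of $P^n$ lying on $F$ is precisely an ideal vertex of $F$; hence, counting incident pairs $(v,F)$ in two ways,
\[
2(n-1)\, v_\infty(P^n)\;=\;\sum_{F\ \text{facet of}\ P^n} v_\infty(F).
\]
For $n-1\geq 5$ Theorem~\ref{theorem: compact dimensions} forbids compact right--angled polyhedra in dimension $n-1$, so every facet $F$ is non--compact and the inductive hypothesis gives $v_\infty(F)\geq v_\infty^{n-1}$. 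Combining this with a lower bound on the number of facets $a_{n-1}(P^n)$ we get
\[
v_\infty(P^n)\;\geq\;\frac{a_{n-1}(P^n)}{2(n-1)}\,v_\infty^{n-1},
\]
and since the left--hand side is an integer we may round up.

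The quantitative input that is still missing is the lower bound on $a_{n-1}(P^n)$, and this is exactly where the improved Nikulin inequality of Sections~\ref{section: ideal polyhedra} and~\ref{section: finite volume polyhedra} enters: because each ideal vertex forces $2(n-1)$ incident facets (instead of the $n$ forced by a finite, simple vertex), the average number of vertices of a facet is large, and through the (improved) Nikulin inequality this turns into a lower bound on $a_{n-1}(P^n)$ in terms of the lower--dimensional data $v_\infty^{n-1},v_\infty^{n-2},\dots$; applying the same facet/ridge incidence count one level deeper sharpens it further. Feeding the resulting estimate into the displayed inequality and running the recursion dimension by dimension produces the tabulated values. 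The base of the induction, $n=5$, must be handled separately: there facets are $4$--dimensional and may be compact, so Theorem~\ref{theorem: compact dimensions} is vacuous; instead one uses that a non--compact finite volume polyhedron has at least one ideal vertex and rules out the case of a single ideal vertex directly from the cuboid structure of its link, giving $v_\infty^5=2$.

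The main obstacle is obtaining the \emph{sharp} lower bound on $a_{n-1}(P^n)$: the naive estimate combining $a_{n-1}(P^n)\geq a_{n-1}^{\min}$ with $v_\infty(F)\geq v_\infty^{n-1}$ is not strong enough in the top dimensions, so one must exploit that distinct facets are glued along ridges and cannot all be extremal simultaneously, and keep careful track of the improved Nikulin bound and of the roundings at each step. A secondary difficulty is the base case $n=5$ and, more generally, controlling those dimensions in which facets are still permitted to be compact.
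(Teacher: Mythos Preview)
Your double--counting identity
\[
2(n-1)\,v_\infty(P^n)=\sum_{F\text{ facet}}v_\infty(F)\ \geq\ a_{n-1}(P^n)\cdot v_\infty^{\,n-1}
\]
is exactly the paper's Lemma~\ref{lemma: ideal vertices}, and the inductive scheme you outline is the right one. The genuine gap is in how you propose to obtain the lower bound on $a_{n-1}(P^n)$.

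You say that ``this is exactly where the improved Nikulin inequality of Sections~\ref{section: ideal polyhedra} and~\ref{section: finite volume polyhedra} enters.'' It is not. Those two sections prove \emph{non--existence} in dimensions $7$ (ideal) and $13$ (finite volume) and contribute nothing to the facet counts in dimensions $5$--$12$. The facet bounds in the paper come instead from Dufour's quantity $\nu_n=\min_{P\in\mathcal P^n}\bigl(a_{n-1}(P)+v_\infty(P)\bigr)$ via Propositions~\ref{proposition: nu_5}--\ref{proposition: nu_n}: one has $a_{n-1}(P^n)\geq 1+\nu_{n-1}$, and $\nu_{n-1}$ is controlled first by Dufour's recursion $\nu_n\geq 5-2n+2\nu_{n-1}$ (giving $\nu_5\geq26$, $\nu_6\geq45$, $\nu_7\geq81$) and then, once the ideal--vertex bounds themselves become large, by the trivial inequality $\nu_{n-1}\geq a_{n-2}(P^{n-1})+v_\infty(P^{n-1})\geq(1+\nu_{n-2})+v_\infty^{\,n-1}$. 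This bootstrap between $\nu$ and $v_\infty$ is the actual engine; your proposal neither names $\nu$ nor supplies any alternative mechanism, and the mechanism you do describe (``improved Nikulin \dots\ turns into a lower bound on $a_{n-1}$'') does not exist in the paper. Relatedly, the difficulty you flag (``the naive estimate \dots\ is not strong enough in the top dimensions, so one must exploit that distinct facets are glued along ridges and cannot all be extremal simultaneously'') is imaginary: the paper uses precisely the naive product bound, just with the correct $a_{n-1}$ input.

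Your base case $n=5$ is also not the paper's. You propose to rule out a single ideal vertex ``directly from the cuboid structure of its link''; the paper instead observes that if $v_\infty(P^5)\leq1$ then every $3$--face has at most one ideal vertex, whence by Nonaka (Proposition~\ref{proposition: facets of 3-polyhedra}) each $3$--face has at least $12$ facets, contradicting the ordinary Nikulin bound $a_3^2(P^5)<12$ of Proposition~\ref{proposition: Nikulin inequality}. Without this (or an equivalent argument) the induction has no starting point.
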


The proof of the theorem and more precise bounds are given in Section~\ref{section: cusps and facets}.

\subsection*{Acknowledgements}

The author is grateful to Nikolay Bogachev and Alexander Kolpakov for helpful discussions and remarks.

\subsection*{Funding}

The work was supported by the Theoretical Physics and Mathematics Advancement Foundation ``BASIS''.

\section{Right--angled hyperbolic polytopes}

The \emph{Minkowski space} $\RR^{n, 1}$ is the real vector space 
\[
	\RR^{n + 1} = \{ (x_0, x_1, \dots, x_n) \mid x_i \in \RR, \ i = 0, \dots, n\}
\]
 equipped with the indefinite scalar product
\[
	\langle x, y \rangle = - x_0 y_0 + x_1 y_1 + \dots + x_n y_n.
\]
Consider the two--sheeted hyperboloid 
\[
	H = \{x \in \RR^{n, 1} \mid \langle x, x \rangle = -1\}.
\] 
The \emph{hyperbolic space} is its upper half--sheet 
\[
	\HH^n = \{x \in \RR^{n, 1} \mid \langle x, x \rangle = -1, \ x_0 > 0\}
\] 
with the induced metric. Even though the Minkowski scalar product is indefinite, it induces a Riemannian metric on the hyperbolic space $\HH^n$. This metric turns out to have constant sectional curvature $-1$.  

The central projection of $\HH^n$ onto the plane $x_0 = 1$ through the origin produces an open ball. Its boundary is called the \emph{ideal boundary} $\partial \HH^n$ of the hyperbolic space $\HH^n$. The points of $\partial \HH^n$ correspond to the isotropic vectors 
\[
	\{x \in \mathbb{R}^{n,1} \mid \langle x, x \rangle = 0, \ x_0 > 0\} \mathbin{/} \RR_{>0}.
\]
The union $\overline{\HH^n} = \HH^n \cup \partial \HH^n$ is called the \emph{compactification} of $\HH^n$.

Any vector $e \in \RR^{n, 1}$ with $\langle e, e \rangle = 1$ defines the associated \emph{hyperbolic hyperplane} 
\[
	H_e = \HH^n \cap \{x \mid \langle e, x \rangle = 0\}
\] 
and the respective closed \emph{hyperbolic half--space} 
\[
	H^-_e = \HH^n \cap \{x \mid \langle e, x \rangle \leqslant 0\}.
\] 
By $\overline{H_e}$ we denote the closure of $H_e$ in $\overline{\HH^n}$. If $\langle e_1, e_1 \rangle = \langle e_2, e_2 \rangle = 1$ and $\langle e_1, e_2 \rangle \leqslant 0$ then the following holds:
\begin{enumerate}
	\item if $\langle e_1, e_2 \rangle > -1$, then the hyperplanes $H_{e_1}$ and $H_{e_2}$ intersect and the angle $\phi = \angle(H^-_{e_1}, H^-_{e_2})$ can be found from the equation $\cos \phi = -\langle e_1, e_2 \rangle$;
	\item if $\langle e_1, e_2 \rangle = -1$, then the hyperplanes $H_{e_1}$ and $H_{e_2}$ do not intersect while their closures $\overline{H_{e_1}}$ and $\overline{H_{e_2}}$ share a unique point on the boundary $\partial \HH^n$;
	\item if $\langle e_1, e_2 \rangle < -1$, then the closures $\overline{H_{e_1}}$ and $\overline{H_{e_2}}$ do not intersect, and the distance $\rho$ between $H_{e_1}$ and $H_{e_2}$ measured along their unique common perpendicular can be found from the equation $\cosh \rho = -\langle e_1, e_2 \rangle$.
\end{enumerate}

A \emph{convex hyperbolic $n$--dimensional polyhedron} $P$ is the intersection of finitely many closed half--spaces of $\HH^n$. We also assume that the interior of $P$ is non--empty. In the Klein model of the hyperbolic space $\HH^n$ the closure $\overline P \subset \overline{\HH^n}$ of a convex hyperbolic polyhedron $P \subset \HH^n$ is the intersection of a convex Euclidean polytope with the unit ball centred at the origin (see \cite{Vin93}). So, we apply basically the usual Euclidean terms (e.g., faces and vertices) to hyperbolic polyhedra.

We say that a vertex $v$ of $\overline P \subset \overline{\HH^n}$ is a \emph{finite vertex} if $v \in \HH^n$ and an \emph{ideal vertex} if $v \in \partial \HH^n$. A hyperbolic polyhedron has a finite volume if and only if it coincides with the convex hull of its vertices. A finite volume hyperbolic polyhedron is compact if and only if all of its vertices are finite. If a finite volume hyperbolic polyhedron $P$ has \emph{only} ideal vertices, then $P$ is called \emph{ideal}.

A \emph{right--angled hyperbolic polyhedron} is a convex hyperbolic polyhedron with all dihedral angles equal to $\frac{\pi}{2}$. Consider an $n$--dimensional finite volume right--angled hyperbolic polyhedron $P$. It is known that the neighbourhood of every finite and ideal vertex of $P$ is equal to the cone over a simplex and a cube, respectively. Hence, the polyhedron $P$ is \emph{simple at edges}, i.e.\ every edge is contained in exactly $n - 1$ facets.

\section{Combinatorics of convex Euclidean polytopes}

Consider an $n$--dimensional convex Euclidean polytope $P$. We say that $P$ is \emph{simplicial} if every facet of $P$ is a simplex. We say that $P$ is \emph{simple} if every vertex of $P$ is the intersection of exactly $n$ facets. The polar dual of a simplicial polytope is a simple one, and vice versa. 

Let $f_k$ denote the number of $k$--dimensional faces of $P$ (here and below we assume that $f_{-1} = f_{n} = 1$).

\begin{theorem}[Dehn--Sommerville equations]
	For every $n$--dimensional simplicial convex Euclidean polytope the following equations hold:
	\[
		\sum_{j = k}^{n - 1} (-1)^{j} \binom{j + 1}{k + 1} f_j = (-1)^{n - 1} f_k,
	\]
	where $k = -1, 0, \dots, n - 2$.
\end{theorem}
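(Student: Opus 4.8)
The plan is to prove the equations by a double count of the incidence pairs $(F, G)$ of faces of $P$ with $F \subseteq G$ and $\dim F = k$. The two inputs are the Euler--Poincaré relation for convex polytopes of every dimension and the hypothesis that $P$ is simplicial.

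First I would record the Euler--Poincaré relation in the homogeneous form $\sum_{i=-1}^{m} (-1)^i f_i(Q) = 0$, valid for every convex $m$--polytope $Q$ with the conventions $f_{-1}(Q) = f_m(Q) = 1$. Next I would recall the structure of the face lattice of $P$ above a fixed face: if $F$ is a $k$--face of $P$, then the interval $[F, P]$ in the face lattice of $P$ is isomorphic, as a lattice, to the face lattice of a convex polytope $P/F$ of dimension $n - k - 1$ (the quotient, or link, polytope), with $F$ corresponding to the empty face and $P$ to the whole of $P/F$; a $j$--face $G$ with $F \subseteq G$ thus corresponds to a $(j - k - 1)$--dimensional face of $P/F$.

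Applying the Euler relation to $P/F$ and translating dimensions, I obtain, for each $k$--face $F$,
\[
	\sum_{G \colon F \subseteq G \subseteq P} (-1)^{\dim G} = (-1)^{k+1} \sum_{i=-1}^{n-k-1} (-1)^i f_i(P/F) = 0.
\]
Summing this over all $k$--faces $F$ and interchanging the order of summation turns the left--hand side into $\sum_{j=k}^{n} (-1)^j N_{k,j}$, where $N_{k,j}$ is the number of incident pairs $(F, G)$ with $\dim F = k$ and $\dim G = j$. For $k \leqslant j \leqslant n - 1$ the face $G$ is a $j$--simplex by hypothesis, so it has exactly $\binom{j+1}{k+1}$ faces of dimension $k$, whence $N_{k,j} = \binom{j+1}{k+1} f_j$; for $j = n$ the only face is $G = P$, which contains all $f_k$ of the $k$--faces, so $N_{k,n} = f_k$. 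Substituting these values and moving the $j = n$ term to the other side yields $\sum_{j=k}^{n-1} (-1)^j \binom{j+1}{k+1} f_j = -(-1)^n f_k = (-1)^{n-1} f_k$, which is the desired identity; the case $k = -1$ is exactly the Euler relation for $P$ itself and requires no simpliciality.

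The step most in need of care is the identification of the interval $[F, P]$ with the face lattice of the lower--dimensional polytope $P/F$, since this is precisely what licenses the use of the Euler relation in dimension $n - k - 1$; once that structural fact is in place the remainder is bookkeeping with binomial coefficients. If one prefers to avoid the quotient--polytope language, the same counting identity can be extracted from the Euler characteristic of the simplicial sphere given by the boundary complex, applied to the link of $F$.
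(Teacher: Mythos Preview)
The paper does not supply a proof of this theorem; it simply quotes the Dehn--Sommerville equations as a classical fact and then records the two corollaries needed later. Your argument is the standard proof and is correct: applying the Euler relation to each quotient polytope $P/F$, summing over all $k$--faces $F$, and using simpliciality to evaluate the incidence numbers $N_{k,j} = \binom{j+1}{k+1} f_j$ for $j \leqslant n-1$ gives exactly the stated identity. The one structural input you flag---that the upper interval $[F,P]$ is the face lattice of an $(n-k-1)$--polytope---is indeed the point that deserves a reference, but it is a well-known property of convex polytopes (the vertex/face figure construction) and does not require the simplicial hypothesis.
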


\begin{corollary}
	For every $7$--dimensional simplicial convex Euclidean polytope the following equalities hold:
	\[\setlength\arraycolsep{1pt}
	\begin{array}{*{9}{r}}
		f_3 &=& 5 f_2 &-& 15 f_1 &+& 35 f_0 &-&  70, \\
		f_4 &=& 9 f_2 &-& 34 f_1 &+& 84 f_0 &-& 168, \\
		f_5 &=& 7 f_2 &-& 28 f_1 &+& 70 f_0 &-& 140, \\
		f_6 &=& 2 f_2 &-&  8 f_1 &+& 20 f_0 &-&  40.
	\end{array}
	\]
\end{corollary}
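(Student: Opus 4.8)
The plan is to specialize the Dehn--Sommerville equations to $n = 7$ and to solve the resulting linear system for $f_3, f_4, f_5, f_6$ in terms of $f_0, f_1, f_2$. Since $n - 1 = 6$ is even, the sign $(-1)^{n-1}$ on the right--hand side equals $+1$, so for each $k \in \{-1, 0, 1, 2, 3, 4, 5\}$ the equation reads $\sum_{j=k}^{6} (-1)^j \binom{j+1}{k+1} f_j = f_k$. First I would write these seven relations out explicitly, evaluating the small binomial coefficients $\binom{j+1}{k+1}$ by hand; for instance $k = 5$ gives $7 f_6 = 2 f_5$, $k = 2$ gives $-4 f_3 + 10 f_4 - 20 f_5 + 35 f_6 = 0$, and $k = -1$ recovers the Euler relation $f_0 - f_1 + f_2 - f_3 + f_4 - f_5 + f_6 = 2$.

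Not all seven relations are independent: the equations for $k = 4$ and $k = 5$ both collapse to $7 f_6 = 2 f_5$, and the equations for $k = 2$ and $k = 3$ agree modulo that one, so only four of the seven carry new information. I would therefore keep the four equations for $k = -1, 0, 1, 2$ and check that the corresponding $4 \times 4$ coefficient matrix in the unknowns $(f_3, f_4, f_5, f_6)$, namely the matrix with rows $(-1,1,-1,1)$, $(-4,5,-6,7)$, $(-6,10,-15,21)$, $(-4,10,-20,35)$, is nonsingular --- a one--step row reduction shows its determinant equals $1$.

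The rest is routine back--substitution. Solving this system isolates $f_6 = 20 f_0 - 8 f_1 + 2 f_2 - 40$; then $f_5 = \tfrac{7}{2} f_6 = 70 f_0 - 28 f_1 + 7 f_2 - 140$, and substituting these back gives $f_4 = 84 f_0 - 34 f_1 + 9 f_2 - 168$ and $f_3 = 35 f_0 - 15 f_1 + 5 f_2 - 70$, which are exactly the four claimed identities (up to reordering of the terms). Equivalently, one could phrase the whole computation in terms of the $h$--vector, combining the symmetry $h_i = h_{7-i}$ with $f_{k-1} = \sum_{i=0}^{k} \binom{7-i}{7-k} h_i$, but this amounts to the same arithmetic.

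There is no conceptual obstacle here; the only thing that needs care is the bookkeeping of the binomial coefficients and alternating signs in the Dehn--Sommerville sums, and the elementary verification that the selected $4 \times 4$ subsystem is nonsingular so that its solution is the unique one.
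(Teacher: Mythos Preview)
Your proposal is correct and follows exactly the route the paper intends: the corollary is stated immediately after the Dehn--Sommerville equations with no further argument, so specializing those equations to $n=7$ and solving the resulting linear system for $f_3,f_4,f_5,f_6$ is precisely the expected (and only natural) proof. Your identification of the redundancies among the seven relations and the resulting $4\times 4$ system is accurate, and the back--substitution yields the stated identities.
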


\begin{corollary}
	For every $13$--dimensional simplicial convex Euclidean polytope the following equalities hold:
	\[\setlength\arraycolsep{1pt}
	\begin{array}{l *{14}{r}}
		f_6 &=& 8 f_5 &-& 36 f_4 &+& 120 f_3 &-& 330 f_2 &+& 792 f_1 &-& 1716 f_0 &+& 3432, \\
		f_7 &=& 27 f_5 &-& 159 f_4 &+& 585 f_3 &-& 1683 f_2 &+& 4125 f_1 &-& 9009 f_0 &+& 18018, \\
		f_8 &=& 50 f_5 &-& 325 f_4 &+& 1252 f_3 &-& 3685 f_2 &+& 9130 f_1 &-& 20\,020 f_0 &+& 40\,040, \\
		f_9 &=& 55 f_5 &-& 374 f_4 &+& 1474 f_3 &-& 4389 f_2 &+& 10\,934 f_1 &-& 24\,024 f_0 &+& 48\,048, \\
		f_{10} &=& 36 f_5 &-& 250 f_4 &+& 996 f_3 &-& 2982 f_2 &+& 7448 f_1 &-& 16\,380 f_0 &+& 32\,760, \\
		f_{11} &=& 13 f_5 &-& 91 f_4 &+& 364 f_3 &-& 1092 f_2 &+& 2730 f_1 &-& 6006 f_0 &+& 12\,012, \\
		f_{12} &=& 2 f_5 &-& 14 f_4 &+& 56 f_3 &-& 168 f_2 &+& 420 f_1 &-& 924 f_0 &+& 1848. \\
	\end{array}
	\]
\end{corollary}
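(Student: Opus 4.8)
The plan is to apply the Dehn--Sommerville equations with $n = 13$ and solve the resulting linear system. Since $n$ is odd we have $(-1)^{n - 1} = 1$, so for each $k = -1, 0, \dots, 11$ the theorem gives
\[
	\sum_{j = k}^{12} (-1)^{j} \binom{j + 1}{k + 1} f_j = f_k ,
\]
a system of $13$ linear equations in the $13$ unknowns $f_0, \dots, f_{12}$ (recall that $f_{-1} = 1$). This system is known to have rank $7$, so one expects to solve for the seven ``upper'' face numbers $f_6, \dots, f_{12}$ in terms of the six ``lower'' ones $f_0, \dots, f_5$ together with the constant $f_{-1}$, and the corollary records the explicit solution.

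Rather than eliminate variables directly in the $f$--equations (which introduces denominators), I would pass to the $h$--vector $(h_0, \dots, h_{13})$ determined by $\sum_i h_i x^{13 - i} = \sum_i f_{i - 1} (x - 1)^{13 - i}$, equivalently
\[
	h_k = \sum_{i = 0}^{k} (-1)^{k - i} \binom{13 - i}{k - i} f_{i - 1}, \qquad f_{k - 1} = \sum_{i = 0}^{k} \binom{13 - i}{k - i} h_i ,
\]
in which the Dehn--Sommerville relations become the symmetry $h_k = h_{13 - k}$. The computation then splits into three steps. First, for $k \leq 6$ the formula for $h_k$ involves only $f_{-1}, \dots, f_{k - 1}$, hence only $f_{-1}, \dots, f_5$; this linear map is unit lower--triangular, so inverting it writes $h_0, \dots, h_6$ as explicit integer combinations of $f_{-1}, \dots, f_5$. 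Second, for $m = 6, 7, \dots, 12$ one starts from $f_m = \sum_{i = 0}^{m + 1} \binom{13 - i}{m + 1 - i} h_i$ and uses $h_i = h_{13 - i}$ to fold the tail $i \geq 7$ back down: reindexing by $j = 13 - i$ turns $\binom{13 - i}{m + 1 - i}$ into $\binom{j}{m + j - 12}$, so that
\[
	f_m = \sum_{i = 0}^{6} \left[ \binom{13 - i}{m + 1 - i} + \binom{i}{m + i - 12} \right] h_i ,
\]
with the usual convention that a binomial coefficient outside its range vanishes. Third, substitute the expressions from the first step into this formula and simplify; the output is the claimed table.

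The only genuine work is the bookkeeping in the third step --- tracking the integer coefficients across the seven lines --- so the (very mild) obstacle is purely computational: checking that the resulting combinations of binomial coefficients collapse to the stated integers. This can be cross--checked against known specializations, for instance the boundary complex of the $13$--simplex, where $f_{i - 1} = \binom{14}{i}$, or against the single equation coming from $k = 11$, namely $13 f_{12} = 2 f_{11}$, which the first and last lines of the table do satisfy. An equivalent route avoiding the $h$--vector is to solve the $f$--system directly by back--substitution starting from the $k = 11$ equation; it yields the same table with somewhat more painful arithmetic.
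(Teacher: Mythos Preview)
Your proposal is correct. The paper states this corollary without proof, treating it as a routine linear--algebra consequence of the Dehn--Sommerville equations; your route through the $h$--vector (where the relations collapse to the palindromic symmetry $h_k = h_{13-k}$) is the standard clean way to organise exactly that computation, and the direct back--substitution you mention as an alternative is presumably what the paper leaves implicit.

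One small slip in your write--up: the sanity check $13 f_{12} = 2 f_{11}$ coming from $k = 11$ involves the \emph{last two} lines of the table (those for $f_{11}$ and $f_{12}$), not ``the first and last''.
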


\section{Ideal hyperbolic right--angled polyhedra} \label{section: ideal polyhedra}

Consider an ideal $7$--dimensional right--angled hyperbolic polyhedron $P$. Let $a_k$ denote the number of its $k$--dimensional faces (we use $f_k$ only for simplicial polytopes to avoid confusion). The polyhedron $P$ is combinatorially equivalent to a Euclidean polytope. Let us cut off all the vertices of the Euclidean polytope and denote the resulting truncated polytope by $P'$. Let $a'_k$ denote the number of $k$--dimensional faces of $P'$. The following equalities hold:
\[\setlength\arraycolsep{1pt}
\begin{array}{*{2}{ll @{\qquad}} ll}
	a'_0 &= 64 a_0, & 
	a'_1 &= a_1 + 192 a_0, & 
	a'_2 &= a_2 + 240 a_0, \\
	a'_3 &= a_3 + 160 a_0, & 
	a'_4 &= a_4 + 60 a_0, & 
	a'_5 &= a_5 + 12 a_0, \\
	& & a'_6 &= a_6 + a_0.
\end{array}
\]

The polyhedron $P$ is simple at edges. Therefore, the polytope $P'$ is simple, and its dual is simplicial. This fact allows us to apply the Dehn--Sommerville equations:
\[\setlength\arraycolsep{1pt}
\begin{array}{*{9}{r}}
	a_3 + 160 a_0 &=& 5 (a_4 + 60 a_0) &-& 15 (a_5 + 12 a_0) &+& 35 (a_6 + a_0) &-&  70, \\
	a_2 + 240 a_0 &=& 9 (a_4 + 60 a_0) &-& 34 (a_5 + 12 a_0) &+& 84 (a_6 + a_0) &-& 168, \\
	a_1 + 192 a_0 &=& 7 (a_4 + 60 a_0) &-& 28 (a_5 + 12 a_0) &+& 70 (a_6 + a_0) &-& 140, \\
	64 a_0 &=& 2 (a_4 + 60 a_0) &-&  8 (a_5 + 12 a_0) &+& 20 (a_6 + a_0) &-&  40.
\end{array}
\]
After simplifications we obtain: 
\[\setlength\arraycolsep{1pt}
\begin{array}{l *{10}{r}}
	a_3 &=& 5 a_4 &-& 15 a_5 &+& 35 a_6 &-&  5 a_0 &-&  70, \\
	a_2 &=& 9 a_4 &-& 34 a_5 &+& 84 a_6 &-& 24 a_0 &-& 168, \\
	a_1 &=& 7 a_4 &-& 28 a_5 &+& 70 a_6 &-& 38 a_0 &-& 140, \\
	0   &=& 2 a_4 &-&  8 a_5 &+& 20 a_6 &-& 20 a_0 &-&  40.
\end{array}
\]
Then the average number of vertices in a $2$--dimensional face of $P$ equals
\begin{multline*}
	\varkappa = \frac{192 a_0}{a_2} = 
	\frac{\nicefrac{192}{20} (2 a_4 - 8 a_5 + 20 a_6 - 40)}{9 a_4 - 34 a_5 + 84 a_6 - 24 a_0 - 168} = \\
	= \frac{\nicefrac{192}{20} (2 a_4 - 8 a_5 + 20 a_6 - 40)}{9 a_4 - 34 a_5 + 84 a_6 - \nicefrac{24}{20} (2 a_4 - 8 a_5 + 20 a_6 - 40) - 168} = \\
	= \frac{32}{11} \left(1 - \frac{2 a_5 - 6 a_6 + 12}{a_2}\right).
\end{multline*}

We claim that $a_5 > 3 a_6$ and therefore $\varkappa < \frac{32}{11} < 3$. The latter readily implies the absence of $7$--dimensional hyperbolic ideal right--angled polyhedra since every $2$--dimensional face contains at least three vertices. 

Indeed, every face of an ideal right--angled hyperbolic polyhedron is itself an ideal right--angled hyperbolic polyhedron. Every $6$--dimensional finite volume right--angled hyperbolic polyhedron has at least $27$ facets \cite[Lemma~1 and Proposition~4]{Duf10}. On the other hand, every $5$--dimensional face of $P$ is contained in exactly two $6$--dimensional faces of $P$. Thus, $2 a_5 \geqslant 27 a_6$.

\section{Finite volume hyperbolic right--angled polyhedra} \label{section: finite volume polyhedra}

Let $P$ be a finite volume hyperbolic $13$--dimensional right--angled polyhedron. Denote the number of its $k$--dimensional faces by $a_k$, the number of its finite vertices by $v_0$, and the number of its ideal vertices by $v_\infty$. Obviously, $a_0 = v_0 + v_\infty$. The polyhedron $P$ is combinatorially equivalent to a Euclidean polytope. Let us cut off all the vertices of the Euclidean polytope that correspond to the ideal vertices of $P$ and denote the obtained polytope by $P'$. Let $a'_k$ denote the number of $k$--dimensional faces of $P'$. The following equalities hold:
\[\setlength\arraycolsep{1pt}
\begin{array}{*{2}{llrrl @{\qquad}} llrrl}
	a'_0 &=& 4096\,v_\infty &+& v_0, & 
	a'_1 &=& 24\,576\,v_\infty &+& a_1, & 
	a'_2 &=& 67\,584\,v_\infty &+& a_2, \\
	a'_3 &=& 112\,640\,v_\infty &+& a_3, & 
	a'_4 &=& 126\,720\,v_\infty &+& a_4, & 
	a'_5 &=& 101\,376\,v_\infty &+& a_5, \\
	a'_6 &=& 59\,136\,v_\infty &+& a_6, & 
	a'_7 &=& 25\,344\,v_\infty &+& a_7, & 
	a'_8 &=& 7920\,v_\infty &+& a_8, \\
	a'_9 &=& 1760\,v_\infty &+& a_9, & 
	a'_{10} &=& 264\,v_\infty &+& a_{10}, & 
	a'_{11} &=& 24\,v_\infty &+& a_{11}, \\
	&&&&&
	a'_{12} &=& v_\infty &+& a_{12}.
\end{array}
\]

The polyhedron $P$ is simple at edges. Therefore, the polytope $P'$ is simple, and its dual is simplicial. This fact allows us to apply the Dehn--Sommerville equations. After simplifications we obtain:
\[\setlength\arraycolsep{.8pt}
\begin{array}{l *{16}{r}}
	a_6 &=& -1716 a_{12} &+& 792 a_{11} &-& 330 a_{10} &+& 120 a_9 &-& 36 a_8 &+& 8 a_7 &-& 132 v_\infty &+& 3432, \\
	a_5 &=& -9009 a_{12} &+& 4125 a_{11} &-& 1683 a_{10} &+& 585 a_9 &-& 159 a_8 &+& 27 a_7 &-& 1089 v_\infty &+& 18\,018, \\
	a_4 &=& -20\,020 a_{12} &+& 9130 a_{11} &-& 3685 a_{10} &+& 1252 a_9 &-& 325 a_8 &+& 50 a_7 &-& 3740 v_\infty &+& 40\,040, \\
	a_3 &=& -24\,024 a_{12} &+& 10\,934 a_{11} &-& 4389 a_{10} &+& 1474 a_9 &-& 374 a_8 &+& 55 a_7 &-& 6864 v_\infty &+& 48\,048, \\
	a_2 &=& -16\,380 a_{12} &+& 7448 a_{11} &-& 2982 a_{10} &+& 996 a_9 &-& 250 a_8 &+& 36 a_7 &-& 7116 v_\infty &+& 32\,760, \\
	a_1 &=& -6006 a_{12} &+& 2730 a_{11} &-& 1092 a_{10} &+& 364 a_9 &-& 91 a_8 &+& 13 a_7 &-& 3958 v_\infty &+& 12\,012, \\
	v_0 &=& -924 a_{12} &+& 420 a_{11} &-& 168 a_{10} &+& 56 a_9 &-& 14 a_8 &+& 2 a_7 &-& 924 v_\infty &+& 1848. \\
\end{array}
\]

Let $\alpha_0 = \binom{13}{2}$ and $\alpha_\infty = 12 \cdot 2^{11}$ denote the number of $2$--faces containing a finite and ideal vertex, respectively. The average number of vertices in a $2$--dimensional face of a finite volume right--angled hyperbolic $13$--polyhedron is equal to 
\[
	\varkappa = \frac{\alpha_0 v_0 + \alpha_\infty v_\infty}{a_2}.
\]
According to Nikulin inequality, $\varkappa < \frac{13}{3}$. Let us consider the following difference:
\begin{multline*}
	\left(\alpha_0 v_0 + \frac{10309}{6144} \cdot \alpha_\infty v_\infty\right) - \frac{13}{3} \cdot a_2 = \\
	2184 - \frac{26}{3} a_8 + 52 a_9 - 182 a_{10} + \frac{1456}{3} a_{11} - 1092 a_{12}.
\end{multline*}
The coefficients $\frac{13}{3}$ and $\frac{10309}{6144}$ are chosen so in order to cancel $a_7$ and $v_\infty$ on the right--hand side. 

We claim that the difference is negative. Indeed, every face of a hyperbolic finite volume right--angled polyhedron is itself a hyperbolic finite volume right--angled polyhedron. Every $9$--dimensional hyperbolic finite volume right--angled polyhedron has at least $152$ facets and every $11$--dimensional hyperbolic finite volume right--angled polyhedron has at least $564$ facets (\cite[Lemma~1 and Proposition~4]{Duf10}). However, every $8$--face is contained in exactly five $9$--faces and every $10$--face is contained in exactly three $11$--faces. Thus, $152 a_9 \leqslant 5 a_8$ and $564 a_{11} \leqslant 3 a_{10}$. Therefore, $52 a_9 - \frac{26}{3} a_8 < 0$ and $\frac{1456}{3} a_{11} - 182 a_{10} < 0$. Finally, $a_{12} > 2$, so $2184 - 1092 a_{12} < 0$.

Thus we proved that 
\[
	\frac{\alpha_0 v_0 + \frac{10309}{6144} \cdot \alpha_\infty v_\infty}{a_2} < \frac{13}{3}.
\]
The left part of the inequality is a weighted average number of the vertices in $2$--dimensional faces: the contribution of every finite vertex equals $1$ and the contribution of every ideal vertex equals $\frac{10309}{6144} > 1.6778$. Meanwhile, $2 \cdot \frac{10309}{6144} > 3.3557$ and $3 \cdot \frac{10309}{6144} > 5.03369$. Every $2$--face of a finite volume right--angled polyhedron is either contains at least $3$ ideal vertices, or $2$ ideal and $1$ finite vertices, or $1$ ideal and $3$ finite vertices, or $5$ finite vertices. Therefore, the weighted average number of the vertices in $2$ dimensional face is greater than $4.34 > \frac{13}{3}$. This contradicts the bound we obtained.

\section{Number of ideal vertices} \label{section: cusps and facets}

Recall that $\mathcal{P}^n$ denotes the family of finite volume non--compact right--angled hyperbolic polyhedra, $a_k(P)$ and $v_\infty(P)$ denote the number of $k$--faces and the number of ideal vertices of a finite volume right--angled hyperbolic polyhedron $P$ respectively. For a polyhedron $P$ denote by $a_k^l(P)$ the average number of $l$--faces of a $k$--face. In other words, 
\[
	a_k^l(P) = \frac{1}{a_k(P)} \sum_{\dim F = k} a_l(F),
\]
where $F$ runs over all $k$--faces of $P$.

\begin{proposition}[{\cite{Nik81}}, {\cite[Theorem~10]{Kho86}}] \label{proposition: Nikulin inequality}
	Let $P$ be an $n$--polytope that is simple at edges. Then
	\[
		a_k^l(P) < \binom{n - l}{n - k} \frac{\binom{\lceil n / 2 \rceil}{l} + \binom{\lfloor n / 2 \rfloor}{l}}{\binom{\lceil n / 2 \rceil}{k} + \binom{\lfloor n / 2 \rfloor}{k}}.
	\]
\end{proposition}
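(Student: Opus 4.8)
The plan is to reduce the statement to the case of a \emph{simple} polytope, where it becomes a purely numerical inequality controlled by the Dehn--Sommerville equations, and then to transfer it back by truncation.

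I would first isolate two structural facts about a polytope $P$ simple at edges. Examining a neighbourhood of an edge, where $P$ looks like $\RR$ times a cone over an $(n-2)$-simplex, one checks that every $j$-face $F$ with $j\ge1$ lies in exactly $n-j$ facets; hence the face figure $P/F$ is a polytope of dimension $n-j-1$ with $n-j$ vertices, i.e.\ a simplex. Consequently, for $l\ge1$ every $l$-face of $P$ lies in exactly $\binom{n-l}{n-k}$ of the $k$-faces, so
\[
	a_k^l(P)=\binom{n-l}{n-k}\cdot\frac{a_l(P)}{a_k(P)}
\]
exactly as for a simple polytope, and the inequality to be proved becomes $a_l(P)\,c_{n,k}<a_k(P)\,c_{n,l}$ with $c_{n,j}:=\binom{\lceil n/2\rceil}{j}+\binom{\lfloor n/2\rfloor}{j}$; for $l=0$ one has instead $a_k^0(P)=a_k(P)^{-1}\sum_v a_{k-1}(P/v)$. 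The same local picture shows that each vertex figure $P/v$ is a \emph{simple} $(n-1)$-polytope, and that the faces of $P$ are themselves simple at edges.

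For a simple $n$-polytope $Q$ I would pass to the dual simplicial polytope $Q^{*}$ with $h$-vector $(h_0,\dots,h_n)$. The Dehn--Sommerville equations give $h_i=h_{n-i}$, and the generalized lower bound inequalities give $1=h_0\le h_1\le\dots\le h_{\lfloor n/2\rfloor}$, so one may write $h=\sum_{m=0}^{\lfloor n/2\rfloor}g_m\,\mathbf 1_{[m,\,n-m]}$ with $g_0=1$ and all $g_m\ge0$, whence $a_j(Q)=\sum_m g_m\bigl(\binom{n-m+1}{j+1}-\binom{m}{j+1}\bigr)$. The inequality $a_l(Q)\,c_{n,k}<a_k(Q)\,c_{n,l}$ is then linear in the $g_m$, so it reduces to checking, for each $m$, the elementary binomial inequality $\bigl(\binom{n-m+1}{l+1}-\binom{m}{l+1}\bigr)c_{n,k}\le\bigl(\binom{n-m+1}{k+1}-\binom{m}{k+1}\bigr)c_{n,l}$; the generator $m=\lfloor n/2\rfloor$ yields equality --- which is exactly why $c_{n,j}$ is the sharp constant --- and strictness comes from the simplex generator $m=0$, present because $g_0=1$, for which the inequality is strict once $k\ge2$.

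Finally, the transfer: given $P$ simple at edges, truncate all of its vertices to obtain $P'$. Since every edge of $P$ lies in $n-1$ facets, every vertex of $P'$ lies in $n$ facets, so $P'$ is simple, and a short count gives $a_j(P')=a_j(P)+\sum_v f_{j-1}(P/v)$. One then wants to deduce the inequality for $P$ from the simple case applied to $P'$ and to the (simple) vertex figures $P/v$, fed through the structural facts above. This transfer is the step I expect to be the main obstacle: the face-number identity couples $a_j(P)$ with the vertex-figure sums $\sum_v f_{j-1}(P/v)$, and one must show these corrections never break the inequality --- most delicately when $l=0$, where the number of $k$-faces through a vertex varies from vertex to vertex. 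Everything else --- Dehn--Sommerville and the binomial estimates --- is routine.
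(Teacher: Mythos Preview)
The paper does not prove this proposition at all: it is stated with citations to Nikulin and Khovanskii and then used as a black box (in the proof of Corollary~\ref{corollary: cusps of 5-polyhedra}). So there is no in-paper argument to compare your proposal against.

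On your sketch itself, two remarks. First, your treatment of the simple case invokes the generalized lower bound theorem $h_0\le h_1\le\dots\le h_{\lfloor n/2\rfloor}$; this is Stanley's theorem, proved via the hard Lefschetz theorem for toric varieties, and is far deeper than anything Nikulin or Khovanskii use. Their arguments need only the Dehn--Sommerville relations together with elementary estimates, so while your route is valid it imports much heavier machinery than necessary. Second, your truncation bookkeeping is off by one: comparing with the explicit counts in Sections~\ref{section: ideal polyhedra} and~\ref{section: finite volume polyhedra} of the paper (e.g.\ $a'_1=a_1+192\,a_0$ for the $7$-dimensional ideal case, where $192$ is the number of \emph{edges}, not vertices, of the $6$-cube), the correct formula for $1\le j\le n-2$ is $a_j(P')=a_j(P)+\sum_v f_{j}(P/v)$, not $f_{j-1}$; the endpoints $j=0$ and $j=n-1$ also behave differently from your formula. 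This is a fixable slip, but combined with the fact that you explicitly leave the transfer step unresolved, the proposal is really an outline rather than a proof --- and the step you flag as the main obstacle is precisely Khovanskii's contribution.
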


In \cite{Non15} Nonaka studied the right--angled hyperbolic $3$--polyhedra with a single ideal vertex and obtained the following result.

\begin{proposition}[{\cite[Corollary~3.6]{Non15}}] \label{proposition: facets of 3-polyhedra}
	If $P^3$ is a finite volume right--angled hyperbolic $3$--polyhedron and $v_\infty(P^3) \leqslant 1$, then $a_2(P^3) \geqslant 12$.
\end{proposition}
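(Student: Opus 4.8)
The plan is to split along the value of $v_\infty(P^3)$, treating the compact case $v_\infty=0$ and the one‑cusped case $v_\infty=1$ separately, and to run in both a double count of the $2$–faces based on Euler's formula together with Gauss–Bonnet. I would start from the structural facts recalled in the text: $P$ is simple at edges, every finite vertex is a cone over a triangle (so lies in exactly $3$ facets) and every ideal vertex is a cone over a square (so lies in exactly $4$ facets, its vertex figure being a quadrilateral). Writing $V,E,F$ for the numbers of vertices, edges and $2$–faces and $v_0,v_\infty$ for the finite and ideal vertices, the degree count $2E=3v_0+4v_\infty$ together with $V-E+F=2$ gives $F=2+\tfrac{v_0}{2}+v_\infty$ and, after elimination, the clean identity $\sum_f(6-s_f)=12+2v_\infty$, where $s_f$ is the number of edges of the $2$–face $f$. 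The geometric ingredient is that every $2$–face of $P$ is a right‑angled hyperbolic polygon (angle $\pi/2$ at finite vertices, $0$ at ideal ones), so by Gauss–Bonnet a $2$–face with $t$ ideal vertices has strictly more than $4-t$ edges; since $v_\infty\le 1$ this means $s_f\ge 5$ for faces avoiding the unique ideal vertex and $s_f\ge 4$ for faces containing it. When $v_\infty=0$ this finishes things at once: every term $6-s_f\le 1$, hence $12=\sum_f(6-s_f)\le F$ (this is the classical dodecahedron bound, and one could equally invoke Pogorelov's theorem).

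The case $v_\infty=1$ is where the real work lies, since the same computation only yields $F\ge 10$ and one must gain $2$ more. Let $p$ be the ideal vertex and $Q_1,\dots,Q_4$ the four facets meeting at $p$ in cyclic order; for all other faces $6-s_f\le 1$, so from $14=\sum_f(6-s_f)\le (F-4)+\sum_{i=1}^4(6-s_{Q_i})$ we see that it is enough to prove $\sum_{i=1}^4 s_{Q_i}\ge 18$, i.e.\ to rule out the profiles $(4,4,4,4)$ and $(4,4,4,5)$; in both of these at least three of the $Q_i$ are quadrilaterals, and among four cyclically arranged faces any three may be taken consecutive. So the heart of the matter is the statement: \emph{at an ideal vertex of a one‑cusped finite volume right‑angled hyperbolic $3$–polyhedron, at most two of the four incident $2$–faces are quadrilaterals.}

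To attack this I would first push the Euler bookkeeping a little further. Removing the star of $p$, i.e.\ $Q_1\cup\dots\cup Q_4$, from $\partial P$ leaves a polygonal disk $R$; its faces all have $\ge 5$ edges, its boundary cycle has length $b=\bigl(\sum_i s_{Q_i}\bigr)-8$, the four vertices $u_i=Q_{i-1}\cap Q_i\cap\partial R$ carry no edge into the interior of $R$ while every other boundary vertex carries exactly one, and every interior vertex of $R$ has degree $3$. Combining the disk relation $V_R-E_R+F_R=1$ with $5F_R\le 2E_R-b$ forces $R$ to have at least $b-2=\bigl(\sum_i s_{Q_i}\bigr)-10$ interior vertices, whence $F=3+\tfrac{v_0}{2}\ge\bigl(\sum_i s_{Q_i}\bigr)-6$; this reproves $F\ge 10$ but nothing more, so the final two units must come from a non‑combinatorial input. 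Here I would analyse the second neighbourhood of $p$ in the presence of three quadrilateral facets $Q_i=p\,u_i\,w_i\,u_{i+1}$: the three facets through $u_{i+1}$ are $Q_i$, $Q_{i+1}$ and a single further facet $G_{i+1}\ni w_i,u_{i+1},w_{i+1}$ all of whose vertices are finite (hence $G_{i+1}$ has $\ge 5$ edges), consecutive $G_j$'s share an edge, and so on; tracking exactly which faces coincide, one should see that either $R$ contains a $2$–face with only finite vertices and at most four edges, or — after truncating $p$ to a quadrilateral face — the resulting simple $3$–polytope contains a prismatic $3$– or $4$–circuit. The first is impossible by Gauss–Bonnet, the second by Andreev's realizability theorem, giving the desired contradiction. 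The main obstacle is precisely this last step: a clean case analysis that, in each of the few profiles with $\sum_i s_{Q_i}\le 17$, pins down the forbidden small face or the prismatic circuit; the compact case and the Euler/Gauss–Bonnet bookkeeping are routine by comparison.
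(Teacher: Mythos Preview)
The paper does not supply a proof of this proposition: it is quoted from Nonaka \cite[Corollary~3.6]{Non15} and used as a black box, so there is nothing in the paper to compare your argument against.

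On its own merits: your treatment of $v_\infty=0$ is correct and standard, and for $v_\infty=1$ the Euler/Gauss--Bonnet bookkeeping and the reduction to $\sum_{i=1}^{4}s_{Q_i}\ge 18$ (equivalently, at most two quadrilaterals through the cusp) are clean and correct. The gap is exactly where you yourself flag it: the claim that three of the $Q_i$ cannot simultaneously be quadrilaterals is asserted but not proved. Your proposed route via Andreev is plausible, but two points need tightening. First, the compact Andreev/Pogorelov theorem does not apply to the truncated polytope $P'$, since $P'$ already carries the quadrilateral face $C$; what you actually need is that a finite-volume right-angled $3$-polyhedron (the original $P$) admits no prismatic $3$- or $4$-circuit, which follows from the finite-volume version of Andreev's theorem or from a short direct argument with orthogonal hyperplanes. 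Second, and more substantively, you have not shown that the bad profiles actually force such a circuit or a small face: in the $(4,4,4,4)$ case the second-layer faces $G_1,\dots,G_4$ do form a $4$-cycle of adjacencies, but you must still rule out the degenerations $G_1=G_3$ or $G_2=G_4$ and verify that no three of the $G_i$ share a vertex; the $(4,4,4,5)$ profile requires its own analysis. Until this case analysis is carried out, the proof remains incomplete.
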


\begin{corollary} \label{corollary: cusps of 5-polyhedra}
	If $P^5 \in \mathcal{P}^5$, then $v_\infty(P^5) \geqslant 2$.
\end{corollary}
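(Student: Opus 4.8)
The plan is to argue by contradiction, assuming $v_\infty(P^5) = 1$, and to derive from this the impossible chain of inequalities $12 \leqslant a_3^2(P^5) < 12$. The two ingredients are Nonaka's facet bound for $3$--polyhedra (Proposition~\ref{proposition: facets of 3-polyhedra}) applied to the $3$--faces of $P^5$, and Nikulin's inequality (Proposition~\ref{proposition: Nikulin inequality}) in the borderline case $n = 5$, $k = 3$, $l = 2$.

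Since $P^5 \in \mathcal{P}^5$ is a finite volume \emph{non--compact} right--angled hyperbolic polyhedron, it is not the convex hull of only finite vertices, so it has at least one ideal vertex; thus $v_\infty(P^5) \geqslant 1$ and it suffices to rule out $v_\infty(P^5) = 1$. Suppose then that $P^5$ has exactly one ideal vertex. Every face of a finite volume right--angled hyperbolic polyhedron is again a finite volume right--angled hyperbolic polyhedron, and the ideal vertices of a face $F$ are exactly those ideal vertices of $P^5$ lying on $F$. Hence every $3$--face $F$ of $P^5$ satisfies $v_\infty(F) \leqslant 1$, and Proposition~\ref{proposition: facets of 3-polyhedra} gives $a_2(F) \geqslant 12$. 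Since $P^5$ has at least one $3$--face, averaging over all of them yields $a_3^2(P^5) \geqslant 12$.

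On the other hand, $P^5$ is simple at edges, so Proposition~\ref{proposition: Nikulin inequality} applies with $n = 5$, $k = 3$, $l = 2$. As $\lceil 5/2 \rceil = 3$ and $\lfloor 5/2 \rfloor = 2$, the right--hand side is
\[
	\binom{3}{2} \cdot \frac{\binom{3}{2} + \binom{2}{2}}{\binom{3}{3} + \binom{2}{3}} = 3 \cdot \frac{3 + 1}{1 + 0} = 12,
\]
so $a_3^2(P^5) < 12$, contradicting the previous paragraph. Therefore $v_\infty(P^5) \geqslant 2$.

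The computation is entirely routine; the one point to be careful about is that Nikulin's inequality is \emph{strict}, which is precisely what makes the value $12$ usable here — a non--strict version would only give $a_3^2(P^5) \leqslant 12$ and no contradiction. So the only real content is the observation that $(n,k,l) = (5,3,2)$ is exactly the combination for which Nikulin's bound meets Nonaka's $3$--polyhedron facet bound, and this same mechanism (faces of low dimension forced to carry ``too many'' sub--faces) is what drives the arguments of Sections~\ref{section: ideal polyhedra} and~\ref{section: finite volume polyhedra}.
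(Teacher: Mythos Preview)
Your proof is correct and follows the same route as the paper's: assume $v_\infty(P^5)\leqslant 1$, apply Proposition~\ref{proposition: facets of 3-polyhedra} to every $3$--face to get $a_3^2(P^5)\geqslant 12$, and contradict this with Nikulin's strict bound $a_3^2(P^5)<12$ for $(n,k,l)=(5,3,2)$. The only cosmetic difference is that you first dispose of $v_\infty=0$ separately and then argue against $v_\infty=1$, whereas the paper treats both at once under the hypothesis $v_\infty\leqslant 1$.
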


\begin{proof}
	Suppose that $v_\infty(P^5) \leqslant 1$. Every $3$--face of $P^5$ is a right--angled hyperbolic $3$--polyhedron of finite volume with at most one ideal vertex. Therefore, according to Proposition~\ref{proposition: facets of 3-polyhedra}, every $3$--face of $P$ contains at least $12$ facets and $a_3^2(P^5) \geqslant 12$. Meanwhile, according to Proposition~\ref{proposition: Nikulin inequality}, $a_3^2(P^5) < 12$.
\end{proof}

Let $\nu(P) = a_{n - 1}(P) + v_\infty(P)$ and if $\mathcal{P}^n \ne \varnothing$, let
\[
	\nu_n = \min_{P^n \in \mathcal{P}^n} \nu(P^n).
\]
Dufour used this value to prove Theorem~\ref{theorem: finite volume dimensions}. He discovered the following relations.

\begin{proposition}[{\cite[Proposition~4]{Duf10}}] \label{proposition: nu_5}
	$\nu_5 \geqslant 26$.
\end{proposition}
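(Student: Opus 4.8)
The plan is to mimic the computations of Sections~\ref{section: ideal polyhedra} and~\ref{section: finite volume polyhedra}. Let $P \in \mathcal{P}^5$ and abbreviate $a_k = a_k(P)$, $v_0 = v_0(P)$, $v_\infty = v_\infty(P)$, so that $a_0 = v_0 + v_\infty$. Realise $P$ combinatorially as a Euclidean $5$-polytope and truncate the vertices that correspond to the ideal vertices of $P$; since the vertex figure of an ideal vertex is a $4$-cube, the truncated polytope $P'$ satisfies $a'_0 = v_0 + 16 v_\infty$, $a'_1 = a_1 + 32 v_\infty$, $a'_2 = a_2 + 24 v_\infty$, $a'_3 = a_3 + 8 v_\infty$ and $a'_4 = a_4 + v_\infty$. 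As $P$ is simple at edges, $P'$ is simple and its dual is simplicial, so the Dehn--Sommerville equations in dimension $5$ apply; after simplification they express $a_1, a_2, a_3$ through $a_4, v_0, v_\infty$, in particular $a_2 = 2 a_4 + 2 v_0 + 10 v_\infty - 4$ and $2 a_3 = 6 a_4 + v_0 + 6 v_\infty - 12$.

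Next I would assemble the geometric restrictions. By Corollary~\ref{corollary: cusps of 5-polyhedra} we have $v_\infty \geqslant 2$. Every proper face of $P$ is again a finite volume right--angled hyperbolic polyhedron: in particular every facet of $P$ through an ideal vertex lies in $\mathcal{P}^4$, and every $3$-face is a finite volume right--angled $3$-polyhedron, for which the right--angled analogue of Euler's formula reads $f(G) + 2 i(G) = 2 a_2(G) - 4$, where $f(G)$ and $i(G)$ denote its numbers of finite and ideal vertices. Since $P$ is simple at edges, each $2$-face of $P$ lies in exactly three $3$-faces and three facets, each $3$-face lies in exactly two facets, a finite vertex lies in ten $3$-faces and ten $2$-faces, and an ideal vertex lies in $24$ of the $3$-faces and $32$ of the $2$-faces of $P$ (the numbers of $2$-faces and edges of a $4$-cube). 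The decisive inequality is obtained by double counting, for a fixed ideal vertex $v$, the pairs $(F, G)$ with $F$ a facet of $P$ through $v$ and $G$ a facet of $F$: the facets of $P$ through $v$ are exactly the $8$ ``cube walls'', every $3$-face of $P$ through $v$ is counted twice, and every other $3$-face at most twice, so $a_3 \geqslant 4 m_4$, where $m_4$ is the minimal facet number of a finite volume right--angled hyperbolic $4$-polyhedron (supplied by Dufour's Lemma~1). Finally, since $\sum_{\dim G = 3} i(G) = 24 v_\infty$, all but at most $12 v_\infty$ of the $3$-faces of $P$ have at most one ideal vertex; by Proposition~\ref{proposition: facets of 3-polyhedra} each such $3$-face has at least $12$ facets and hence, by the Euler relation, at least $18$ finite vertices, which, compared with $\sum_{\dim G = 3} f(G) = 10 v_0$, constrains how the finite vertices of $P$ are distributed among its faces.

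The remaining step is bookkeeping: substitute the Dehn--Sommerville relations into the incidence identities and the inequalities above, insert the known values of $m_3$ and $m_4$, and combine everything (with a short case split according to whether $a_3$ is large or small relative to $v_\infty$) to reach $a_4 + v_\infty \geqslant 26$.

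The main obstacle is that the purely combinatorial inequalities available --- Nikulin's inequality (Proposition~\ref{proposition: Nikulin inequality}) and ``every $2$-face has at least three vertices'' --- only bound $a_4$ \emph{from above} in terms of $v_0$ and $v_\infty$, and none of them bounds $v_0$ from above; a priori $P$ could have very few facets and cusps yet arbitrarily many finite vertices, and this has to be excluded. Doing so seems to require the hyperbolic geometry rather than combinatorics alone: the Gram matrix of the outward facet normals of $P$ has signature $(5,1)$ and must define a finite volume polyhedron, while at a cusp the $2(n-1)$ facet normals satisfy rigid orthogonality relations (each ideal vertex contributes four ``parallel'' pairs). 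Turning these into the needed lower bound --- which is the substance of Dufour's argument --- and making the constant land exactly at $26$ is the delicate point.
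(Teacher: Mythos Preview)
The paper does not prove this proposition at all: it is quoted from \cite[Proposition~4]{Duf10} and used as a black box in Section~\ref{section: cusps and facets}. So there is no argument in the paper to compare your attempt against.

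As for your proposal itself, it is not a proof but a sketch that --- by your own admission in the final paragraph --- stops short of the decisive step. You set up the truncated polytope $P'$, record the Dehn--Sommerville relations in dimension~$5$, and collect several incidence identities and inequalities (the double count giving $a_3 \geqslant 4 m_4$ is genuine, for instance). But none of this is shown to yield $a_4 + v_\infty \geqslant 26$. As you yourself note, the combinatorial inequalities you have only bound $a_4$ from \emph{above}, and nothing you wrote controls $v_0$ from above or forces $a_4 + v_\infty$ to be large. Declaring that what remains ``is bookkeeping'' in one paragraph and then, in the next, that the missing step ``seems to require the hyperbolic geometry'' and is ``the substance of Dufour's argument'' is an honest diagnosis, but it means the proposal has a real gap, not just deferred arithmetic.

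One small remark: invoking Corollary~\ref{corollary: cusps of 5-polyhedra} here is logically permissible (its proof uses only Propositions~\ref{proposition: facets of 3-polyhedra} and~\ref{proposition: Nikulin inequality}, not Proposition~\ref{proposition: nu_5}), but it contributes only $v_\infty \geqslant 2$, which is nowhere near enough to reach $26$ with the other ingredients you list.
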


\begin{proposition}[{\cite[Lemma~1]{Duf10}}] \label{proposition: nu and facets}
	Let $P^n \in \mathcal{P}^n$ with $n \geqslant 3$. Then 
	\[
		a_{n - 1}(P^n) \geqslant 1 + \nu_{n - 1}.
	\]
\end{proposition}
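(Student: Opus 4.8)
The plan is to exhibit $1 + \nu_{n-1}$ pairwise distinct facets of $P^n$. Since $P^n$ has finite volume and is non--compact, it has an ideal vertex $v$; fix any facet $F$ of $P^n$ with $v \in F$. Every face of a finite volume right--angled hyperbolic polyhedron is again a finite volume right--angled hyperbolic polyhedron, and $F$ is non--compact because $v$ is an ideal vertex of $F$; hence $F \in \mathcal{P}^{n-1}$ (in particular $\mathcal{P}^{n-1} \neq \varnothing$) and $\nu(F) = a_{n-2}(F) + v_\infty(F) \geq \nu_{n-1}$. So it is enough to find, besides $F$ itself, at least $a_{n-2}(F)$ facets of $P^n$ meeting $F$ in an $(n-2)$--face, together with at least $v_\infty(F)$ further facets of $P^n$ that neither equal $F$ nor meet it in an $(n-2)$--face.

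The first batch is routine. Each facet of the polyhedron $F$ is an $(n-2)$--face of $P^n$, hence is contained in exactly two facets of $P^n$, one of which is $F$. Sending a facet of $F$ to the other facet of $P^n$ containing it gives an injection into the set of facets of $P^n$ that meet $F$ in an $(n-2)$--face: two distinct facets of $F$ cannot share the same ``other'' facet $G$, since $F \cap G$ is a single face of $F$. This produces $a_{n-2}(F)$ facets of $P^n$, all different from $F$.

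For the second batch I would use the combinatorics at an ideal vertex. Recall that the neighbourhood of any ideal vertex $w$ of $P^n$ is a cone over an $(n-1)$--cube, so $w$ is contained in exactly $2(n-1)$ facets of $P^n$, grouped into $n-1$ ``opposite'' pairs; two facets of an opposite pair meet only at $w$, whereas two facets through $w$ from different pairs meet in an $(n-2)$--face of $P^n$ containing $w$. Now let $w$ be one of the $v_\infty(F)$ ideal vertices of $F$. Of the $2(n-1)$ facets of $P^n$ through $w$, one is $F$, its partner $F'_w$ in the opposite pair meets $F$ only at $w$, and the remaining $2(n-2)$ meet $F$ in an $(n-2)$--face. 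Thus $F'_w$ is the unique facet through $w$ that neither equals $F$ nor meets it in an $(n-2)$--face, and $F \cap F'_w = \{w\}$, so the assignment $w \mapsto F'_w$ is injective. These $v_\infty(F)$ facets differ from $F$ and, by construction, from the facets of the first batch. Summing up, $a_{n-1}(P^n) \geq 1 + a_{n-2}(F) + v_\infty(F) = 1 + \nu(F) \geq 1 + \nu_{n-1}$.

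The only point needing care is the analysis at the ideal vertex $w$: one must correctly split the $2(n-1)$ facets through $w$ into ``$F$'', ``the facet opposite $F$'', and ``the facets meeting $F$ in an $(n-2)$--face'', and check that the opposite facet $F'_w$ meets $F$ in precisely the single point $w$ (near $w$ the facets $F$ and $F'_w$ correspond to disjoint facets of the cubical vertex figure, so $F \cap F'_w$ has no positive--dimensional germ at $w$, and a face of $P^n$ has constant dimension). This is exactly what makes $w \mapsto F'_w$ injective, and hence what delivers the extra $v_\infty(F)$ facets; everything here rests on the cubical shape of the ideal vertex figure recalled earlier.
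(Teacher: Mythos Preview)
Your argument is correct and is precisely Dufour's proof, which the paper cites as \cite[Lemma~1]{Duf10} without reproducing: pick a non--compact facet $F$, then count $F$ itself, the $a_{n-2}(F)$ facets adjacent to $F$ across its $(n-2)$--faces, and the $v_\infty(F)$ facets ``opposite'' to $F$ at each of its ideal vertices in the cubical vertex figure. The delicate step --- that the opposite facet $F'_w$ meets $\overline F$ only in $\{w\}$ --- is handled exactly as you indicate, via the vertex--figure correspondence and the fact that $\overline F\cap\overline{F'_w}$ is a single face of the Euclidean polytope $\overline{P^n}$ in the Klein model.
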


\begin{proposition}[{\cite[Lemma~2]{Duf10}}] \label{proposition: nu_n}
	Let $n \geqslant 3$. Then 
	\[
		\nu_n \geqslant 5 - 2n + 2 \nu_{n - 1}.
	\]
\end{proposition}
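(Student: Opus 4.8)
The plan is to prove $\nu(P) \geqslant 2\nu_{n-1} - 2n + 5$ for \emph{every} $P = P^n \in \mathcal{P}^n$ and then take the minimum. Since $P$ is non--compact of finite volume it has an ideal vertex $v$, and since $P$ is simple at edges the vertex figure at $v$ is an $(n-1)$--cube; hence $v$ lies in exactly $2(n-1)$ facets $G_1, \dots, G_{2(n-1)}$ of $P$, grouped into $n-1$ opposite pairs. Fix one opposite pair $F_1 := G_1$ and $F_2 := G_2$. The single geometric fact I will need is that $\overline{F_1} \cap \overline{F_2} = \{v\}$: being opposite in the cubical vertex figure means their outward normals satisfy $\langle e_1, e_2 \rangle = -1$, so $H_{e_1}$ and $H_{e_2}$ are disjoint in $\HH^n$ and their closures meet only at the ideal point $v$, which is represented by the isotropic vector $u := e_1 + e_2$.

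Next I count facets incident to $F_1$ and $F_2$. Each $F_i$ is a finite--volume right--angled $(n-1)$--polyhedron containing the ideal vertex $v$, hence $F_i \in \mathcal{P}^{n-1}$ (in particular $\mathcal{P}^{n-1} \neq \varnothing$, so $\nu_{n-1}$ is defined) and $\nu(F_i) = a_{n-2}(F_i) + v_\infty(F_i) \geqslant \nu_{n-1}$. Inside $F_i$ the vertex figure at $v$ is an $(n-2)$--cube, so exactly $2(n-2)$ of the $a_{n-2}(F_i)$ ridges of $P$ contained in $F_i$ pass through $v$; these ridges pair $F_1$, and also $F_2$, with the $2n-4$ facets $G_3, \dots, G_{2(n-1)}$. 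Let $B_i$ be the set of facets of $P$ meeting $F_i$ along a ridge disjoint from $v$. Using that any two distinct facets of a polytope meet in at most one ridge and that each ridge lies in exactly two facets, one checks $|B_i| = a_{n-2}(F_i) - (2n-4)$ and that $B_i$ is disjoint from $\{F_1, F_2, G_3, \dots, G_{2(n-1)}\}$.

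The crucial, and I expect hardest, point is that $B_1 \cap B_2 = \varnothing$ — this is where right--angledness really enters. If a facet $F' = H^-_{e'}$ met both $F_1$ and $F_2$ in ridges, then, $P$ being right--angled, $\langle e', e_1 \rangle = \langle e', e_2 \rangle = 0$, hence $\langle e', u \rangle = 0$, which says $v \in \overline{F'}$; as $v$ is a vertex of $\overline P$ this forces $v$ to be a vertex of $F'$, so $F' \in \{G_1, \dots, G_{2(n-1)}\}$, and then the unique ridge $F' \cap F_i$ contains $v$ — a contradiction. Granting this, $\{F_1, F_2\}$, $\{G_3, \dots, G_{2(n-1)}\}$, $B_1$ and $B_2$ are pairwise disjoint families of facets of $P$, whence
\[
	a_{n-1}(P) \;\geqslant\; 2 + (2n-4) + |B_1| + |B_2| \;=\; a_{n-2}(F_1) + a_{n-2}(F_2) - 2n + 6 .
\]

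To finish I add $v_\infty(P)$ to both sides and regroup. Since the ideal vertices of $F_1$ and of $F_2$ are ideal vertices of $P$ with $v$ as their only common element, $v_\infty(F_1) + v_\infty(F_2) \leqslant v_\infty(P) + 1$; combined with $\nu(F_i) \geqslant \nu_{n-1}$ this gives
\[
	\nu(P) = a_{n-1}(P) + v_\infty(P) \;\geqslant\; \nu(F_1) + \nu(F_2) - 2n + 6 + \bigl(v_\infty(P) - v_\infty(F_1) - v_\infty(F_2)\bigr) \;\geqslant\; 2\nu_{n-1} - 2n + 5 .
\]
Taking the minimum over $P \in \mathcal{P}^n$ yields $\nu_n \geqslant 5 - 2n + 2\nu_{n-1}$. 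Everything outside the identity $B_1 \cap B_2 = \varnothing$ is routine bookkeeping with vertex figures and the two standard facts about ridges of polytopes.
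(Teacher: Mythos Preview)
Your argument is correct: the bookkeeping with the cubical vertex figure is accurate, the identity $|B_i| = a_{n-2}(F_i) - (2n-4)$ follows from the injectivity of the ridge--to--neighbouring--facet map, the disjointness $B_1 \cap B_2 = \varnothing$ is exactly the orthogonality computation $\langle e', e_1 + e_2 \rangle = 0$ forcing $v \in \overline{F'}$, and the final inclusion--exclusion on ideal vertices gives the stated bound. The paper does not supply its own proof of this proposition but quotes it from \cite{Duf10}; your approach --- choose an ideal vertex, take two opposite facets in its cubical link, and count neighbours --- is precisely Dufour's argument.
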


Using the double counting technique, one can bound the number of the ideal vertices of a finite volume right--angled hyperbolic polyhedra from below as follows.

\begin{lemma} \label{lemma: ideal vertices}
	Let $P^n \in \mathcal{P}^n$.
	\[
		v_\infty(P^n) \geqslant \frac{a_{n - 1}(P^n) \cdot v_\infty'(P^n)}{2\,(n - 1)},
	\]
	where $v_\infty'(P^n)$ is the minimal number of ideal vertices that a facet of $P^n$ contains.
\end{lemma}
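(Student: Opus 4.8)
The plan is to use a double counting argument on incident pairs consisting of an ideal vertex and a facet containing it. Concretely, I would count in two ways the cardinality of the set
\[
	I = \{(v, F) \mid v \text{ is an ideal vertex of } P^n,\ F \text{ is a facet of } P^n,\ v \in F\}.
\]
First I would estimate $|I|$ from above by fixing a facet $F$ and bounding the number of ideal vertices it can contain. Every facet $F$ of $P^n$ is itself an $(n-1)$-dimensional finite volume right--angled hyperbolic polyhedron, so it is simple at edges, and hence every vertex of $F$ lies in exactly $n-1$ facets of $F$. In particular every ideal vertex of $P^n$ lying on $F$ lies on at most\ldots\ well, I only need the crude bound that each such $v$ is incident to exactly $n-1$ facets of $P^n$ other than\ldots\ actually the key local fact is that an ideal vertex of $P^n$ has a cubical link, so it is contained in exactly $n$ facets of $P^n$. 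That gives $|I| = n \cdot v_\infty(P^n)$ exactly, counting by vertices.

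Then I would count $|I|$ by facets: summing over all facets $F$ the number of ideal vertices of $P^n$ contained in $F$. By definition of $v_\infty'(P^n)$ this is an average-type quantity, but I only have a lower bound $v_\infty'(P^n)$ on the number of ideal vertices per facet, so this direction gives $|I| \geqslant a_{n-1}(P^n) \cdot v_\infty'(P^n)$. Wait --- to match the denominator $2(n-1)$ in the statement rather than $n$, the correct incidence to count is between ideal vertices and \emph{ridges} (codimension-$2$ faces), or equivalently the right bookkeeping uses the fact that an ideal vertex lies in the closure of exactly\ldots; the factor $2(n-1)$ suggests counting pairs (ideal vertex, facet \emph{of a facet}) or using that each facet, being right--angled of dimension $n-1$, has each of its own ideal vertices shared with exactly one other facet along a ridge, so the correct double count is: number of (ideal vertex, facet) incidences equals $2(n-1) v_\infty(P^n)$ when one sums the incidences inside each facet. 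I would set this up carefully using the cubical link: the link of an ideal vertex is an $(n-1)$-cube, whose facets are the $2(n-1)$ facets of the cube, each corresponding to a ridge of $P^n$ through $v$; summing over facets $F$ the count of ideal vertices of $F$ itself then contributes $2(n-1)$ per ideal vertex of $P^n$.

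Combining the two counts, $2(n-1)\, v_\infty(P^n) = \sum_F \#\{\text{ideal vertices of } F\} \geqslant a_{n-1}(P^n) \cdot v_\infty'(P^n)$, which rearranges to the claimed inequality
\[
	v_\infty(P^n) \geqslant \frac{a_{n-1}(P^n) \cdot v_\infty'(P^n)}{2(n-1)}.
\]
The one subtle point --- the main obstacle --- is getting the incidence constant exactly right: one must verify via the cubical structure of ideal vertex links that each ideal vertex of $P^n$ is counted exactly $2(n-1)$ times when we sum, over all facets $F$, the number of ideal vertices of the $(n-1)$-polyhedron $F$; equivalently, that an ideal vertex $v$ of $P^n$ lies on exactly $2(n-1)$ of the ridges through it and that each ridge is shared by exactly two facets, so that $v$ appears as an ideal vertex of exactly $2(n-1)$\ldots\ no, of exactly $n$ facets but with multiplicity arising from ridge-sharing. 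I would resolve this by working directly in the link: the link is a combinatorial $(n-1)$-cube, it has $n$ facets (the facets of $P^n$ through $v$) and $2(n-1)\cdot\!$ --- here one checks that $\sum_F (\text{number of ideal vertices of }F)$ counts, for each ideal vertex $v$, the number of pairs (facet $F \ni v$, facet $G$ of $F$ with $v \in G$), and since $v$ has a cubical link this equals $n \cdot 2(n-1)/2 = n(n-1)$; dividing appropriately yields the stated bound with a safe constant. Once the constant is pinned down, the rest is a one-line rearrangement.
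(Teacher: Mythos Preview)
Your approach --- double counting incident pairs $(v,F)$ with $v$ an ideal vertex and $F$ a facet containing it --- is exactly the paper's approach. The gap is that you never pin down the incidence constant, and in fact you compute it incorrectly: you write that the cubical link ``has $n$ facets'', but an $(n-1)$--cube has $2(n-1)$ facets, not $n$. You are confusing the ideal case with the finite case, where the link is an $(n-1)$--simplex and does have $n$ facets.

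Once this is corrected the argument is a single line, with no need for ridges or higher incidences. The facets of $P^n$ through an ideal vertex $v$ correspond bijectively to the facets of the vertex figure at $v$; since that vertex figure is an $(n-1)$--cube, there are exactly $2(n-1)$ of them. Hence
\[
|I| \;=\; \sum_{v\ \text{ideal}} \#\{F : v\in F\} \;=\; 2(n-1)\,v_\infty(P^n),
\]
while counting by facets gives
\[
|I| \;=\; \sum_{\dim F = n-1} v_\infty(F) \;\geqslant\; a_{n-1}(P^n)\cdot v_\infty'(P^n),
\]
and the claimed inequality follows. All of your detours through ridge counts and the quantity $n(n-1)$ are unnecessary and stem from the single miscount of the cube's facets.
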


\begin{proof}
	Since every ideal vertex of the polytope $P^n$ is contained in exactly $2\:(n - 1)$ facets, the following inequality holds:
	\[
		v_\infty(P^n) \cdot 2\,(n - 1) \ = \
		\sum_{\dim F = n - 1} v_\infty(F) \ \geqslant \ 
		a_{n - 1}(P^n) \cdot v_\infty'(P^n),
	\]
	where $F$ runs over all facets of $P^n$.
\end{proof}

Let $P^n \in \mathcal{P}^n$. Proposition~\ref{proposition: nu_5} and Proposition~\ref{proposition: nu_n} imply that $\nu_5 \geqslant 26$, $\nu_6 \geqslant 45$, and $\nu_7 \geqslant 81$. Proposition~\ref{proposition: nu and facets}, Corollary~\ref{corollary: cusps of 5-polyhedra}, and Lemma~\ref{lemma: ideal vertices} provide
\begin{align*}
	a_5(P^6) \geqslant 27, \qquad &
	v_\infty(P^6) \geqslant \left\lceil \frac{27 \cdot 2}{10} \right\rceil = 6, \\
	a_6(P^7) \geqslant 46, \qquad &
	v_\infty(P^7) \geqslant \left\lceil \frac{46 \cdot 4}{12} \right\rceil = 23, \\
	a_7(P^8) \geqslant 82, \qquad &
	v_\infty(P^8) \geqslant \left\lceil \frac{82 \cdot 23}{14} \right\rceil = 135.
\end{align*}
Now applying the definition of $\nu_n$, Proposition~\ref{proposition: nu and facets}, and Lemma~\ref{lemma: ideal vertices} we obtain
\[\setlength\arraycolsep{1pt}
\def\skip{.3em}
\begin{array}{ ll @{\quad} rll @{\quad} ll }
	\nu_8 & \geqslant 217, &
	a_8 & (P^9) & \geqslant 218, &
	v_\infty(P^9) & \geqslant 1704, \\[\skip]
	
	\nu_9 & \geqslant 1922, &
	a_9 & (P^{10}) & \geqslant 1923, &
	v_\infty(P^{10}) & \geqslant 182\,044, \\[\skip]
	
	\nu_{10} & \geqslant 183\,967, &
	a_{10} & (P^{11}) & \geqslant 183\,968, &
	v_\infty(P^{11}) & \geqslant 1\,674\,504\,428, \\[\skip]

	\nu_{11} & \geqslant 1\,674\,688\,396, &
	a_{11} & (P^{12})  & \multicolumn{3}{l}{\geqslant 1\,674\,688\,397,} \\
	& & & & & v_\infty(P^{12}) & \geqslant 127\,466\,960\,740\,760\,088.
\end{array}
\]

\bibliographystyle{alpha}
\bibliography{refs.bib}

\begin{thebibliography}{Kho86}

\bibitem[Duf10]{Duf10}
G.~Dufour.
\newblock Notes on right-angled coxeter polyhedra in hyperbolic spaces.
\newblock {\em Geom. Dedicata}, 147:277--282, 2010.

\bibitem[Kho86]{Kho86}
A.~G. Khovanskii.
\newblock Hyperplane sections of polyhedra, toroidal manifolds and discrete
  groups in {L}obachevskii space.
\newblock {\em Funct. Anal. Appl.}, 20:41--50, 1986.

\bibitem[Kol12]{Kol12}
A.~Kolpakov.
\newblock On the optimality of the ideal right angled \(24\)-cell.
\newblock {\em Algebr. Geom. Topol.}, 12(4):1941--1960, 2012.

\bibitem[Nik81]{Nik81}
V.~V. Nikulin.
\newblock On the classification of arithmetic groups generated by reflections
  in {L}obachevsky spaces.
\newblock {\em Izv. Akad. Nauk SSSR Ser. Mat.}, 45:113--142, 1981.

\bibitem[Non15]{Non15}
J.~Nonaka.
\newblock The number of cusps of right-angled polyhedra in hyperbolic spaces.
\newblock {\em Tokyo J. Math.}, 38(2):539--560, 2015.

\bibitem[Pro86]{Pro86}
M.~N. Prokhorov.
\newblock The absence of discrete reflection groups with noncompact fundamental
  polyhedron of finite volume in {L}obachevskii space of large dimension.
\newblock {\em Izv. Akad. Nauk SSSR Ser. Mat.}, 50(2):413--424, 1986.

\bibitem[PV05]{PV05}
L.~Potyagailo and E.~B. Vinberg.
\newblock On right-angled reflection groups in hyperbolic spaces.
\newblock {\em Comment. Math. Helvetici}, 80:63--73, 2005.

\bibitem[Vin84]{Vin84}
E.~B. Vinberg.
\newblock Absence of crystallographic groups of reflections in {L}obachevsky
  spaces of large dimension.
\newblock {\em Tr. Mosk. Mat. Obs.}, 47:68--102, 1984.

\bibitem[Vin93]{Vin93}
E.~B. Vinberg, editor.
\newblock {\em Geometry II. Spaces of constant curvature}, volume~29 of {\em
  Encyclopaedia of Mathematical Sciences}.
\newblock Springer Berlin Heidelberg, 1993.

\end{thebibliography}

\end{document}